\numberwithin{equation}{section}
\theoremstyle{plain}
\newtheorem{theorem}{Theorem}[section]
\newtheorem{lemma}[theorem]{Lemma}
\newtheorem{proposition}[theorem]{Proposition}
\newtheorem{theoremcite}{Theorem}
\theoremstyle{definition}
\newtheorem{remark}[theorem]{Remark}
\newtheorem{example}[theorem]{Example}
\begin{document}

\title[A sufficient condition for the similarity]{A sufficient condition for the similarity of a
polynomially bounded operator to a contraction}

\author{Maria F. Gamal'}
\address{
 St. Petersburg Branch\\ V. A. Steklov Institute 
of Mathematics\\
 Russian Academy of Sciences\\ Fontanka 27, St. Petersburg\\ 
191023, Russia  
}
\email{gamal@pdmi.ras.ru}

\subjclass[2010]{Primary 47A60; Secondary 47A65}

\keywords{Polynomially bounded operator, similarity, contraction,  $C_0$-operator, 
Carleson interpolating condition, Carleson--Newman Blaschke product.}


\begin{abstract}
 Let $T$ be a  polynomially bounded operator, and let $\mathcal M$ be its invariant subspace. 
Suppose that $P_{\mathcal M^\perp}T|_{\mathcal M^\perp}$ is similar to a contraction, while $\theta(T|_{\mathcal M})=0$, where 
$\theta$ is a finite product of Blaschke products with simple zeros satisfying the Carleson interpolating condition 
(a Carleson--Newman Blaschke product). 
Then $T$ is similar to a contraction. It is mentioned that Le Merdy's example shows that the assumption of  polynomially 
boundedness cannot be replaced by the assumption of  power boundedness.  
  \end{abstract}

\maketitle

\section{Introduction}

 Let $\mathcal H$ be a (complex, separable) Hilbert space, and let $\mathcal B(\mathcal H)$ be the space of (linear, bounded) 
operators acting on $\mathcal H$.  An operator $R\in\mathcal B(\mathcal H)$ is called {\it power bounded}, if 
$\displaystyle{\sup_{n\geq 1}\|R^n\| < \infty}$. An operator $R$ is called {\it polynomially bounded}, 
if there exists a constant $M>0$ such that 
$$\|p(R)\|\leq M \sup\{|p(z)|: \ |z|\leq 1\} \  \text{ for every polynomial } \ p .$$ An operator $T$ is called a {\it contraction} if $\|T\|\leq 1$. 
Clearly, a polynomially bounded operator is power bounded. A contraction is polynomially bounded with the constant $M=1$
(von Neumann inequality; see, for example, {\cite[Proposition I.8.3]{sfbk}}). 

Let $\mathcal H$, $\mathcal K$  be two Hilbert spaces, and let  $\mathcal B(\mathcal H,\mathcal K)$ be the space of 
 (linear, bounded) operators acting from $\mathcal H$ to $\mathcal K$.
Two operators $R\in\mathcal B(\mathcal H)$ and $T\in\mathcal B(\mathcal K)$ are called {\it similar}, if there exists an {\it invertible} 
operator $X\in\mathcal B(\mathcal H,\mathcal K)$  
such that $XR = TX$,  in notation: $R\approx T$. (An operator $X$ is called invertible, if it has a {\it bounded}
 inverse $X^{-1}$.) Clearly, the power boundedness and the polynomially boundedness are preserved under similarity. 

The question whether each power bounded operator is similar to a contraction was posed by Sz.-Nagy \cite{sznagy}. A negative answer 
on this question was given by Foguel \cite{foguel}, where a power bounded but not polynomially bounded operator was constructed 
(see also \cite{halmos64} and \cite{lebow}). Therefore, in \cite{halmos70} the question 
whether each polynomially bounded operator is similar to a contraction 
was posed. A negative answer on this question was given by Pisier \cite{pisier}, see also \cite{dp}. 

The example of a power bounded operator which is not polynomially bounded from \cite{foguel} and the example of a 
polynomially bounded operator which is not similar to a contraction from \cite{pisier} have the form 
\begin{equation}\label{1.1}\begin{pmatrix} T_0 & \ast \\ \mathbb O & T_1\end{pmatrix}, \end{equation}
where $T_0$ and $T_1$ are contractions. The question on additional conditions which garantee that the operator of 
the form \eqref{1.1} is similar to a contraction was considered in \cite{cassier} (among other questions) and \cite{badea}.  
The following theorem is a particular case of {\cite[Corollary 4.2]{cassier}}.

\begin{theoremcite}\cite{cassier}\label{tha}  Assume that an operator $V$ is similar to an isometry,  an operator $T$ is similar  to a contraction, and 
$R$ is a power bounded operator of the form 
\begin{equation}\label{1.2} R=\begin{pmatrix} V & \ast \\ \mathbb O & T\end{pmatrix}. \end{equation} 
Then $R$ is similar to a contraction.\end{theoremcite}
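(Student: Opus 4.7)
The plan is to reduce to the case of an honest isometry and an honest contraction, and then construct an upper-triangular similarity that either decouples the two blocks or at least makes the resulting matrix contractive.

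\textbf{Step 1: Reduction.} I choose invertible operators $A, B$ with $V' := AVA^{-1}$ isometric and $T' := BTB^{-1}$ contractive. Conjugating $R$ by $\mathrm{diag}(A,B)$ preserves both the upper-triangular block structure and the power-boundedness, so I may assume from the start that $V$ is an isometry, $T$ is a contraction, and the $(1,2)$-block is $X$.

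\textbf{Step 2: Sylvester-type similarity.} Conjugation by $\begin{pmatrix} I & Y \\ 0 & I \end{pmatrix}$ changes the $(1,2)$-entry from $X$ to $X+YT-VY$. The ideal target is the Sylvester equation $VY-YT=X$, which would block-diagonalize $R$ to $V\oplus T$---both contractions---and complete the proof. A direct induction gives
\[
R^n=\begin{pmatrix} V^n & S_n \\ 0 & T^n \end{pmatrix},\qquad S_n=\sum_{k=0}^{n-1}V^k X T^{n-1-k},
\]
and power-boundedness forces $\sup_n \|S_n\|<\infty$. When $V$ is unitary, the partial sums $Y_N:=\sum_{j=0}^{N-1}V^{-(j+1)}XT^j$ satisfy $\|Y_N\|=\|V^{-N}S_N\|\le\|S_N\|$ and are therefore uniformly bounded. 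A Banach-limit in the weak operator topology then produces a bounded candidate $Y$.

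\textbf{Step 3: Wold decomposition.} I decompose $V=V_u\oplus V_s$ into unitary and unilateral-shift parts. The Banach-limit procedure of Step 2 applies directly to the unitary summand $V_u$. For the shift summand $V_s$, which is already an isometry with a transparent $H^{\infty}$ functional model, I would instead exploit the structure of analytic Toeplitz operators: the bounded sequence $S_n^{(s)}=\sum V_s^k X_s T^{n-1-k}$ should encode a bounded $H^{\infty}$-symbol through which one can conjugate the $(V_s,T)$-block into contractive form.

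\textbf{Main obstacle.} The Banach-limit method does not exactly solve the Sylvester equation: passing to the limit in the identity $VY_N-Y_NT=X-V^{-N}XT^N$ yields only $VY-YT=X-Z$, where $Z$ is a residual intertwiner of $V$ and $T$ (i.e.\ $VZ=ZT$) that is not zero in general. The crux of the argument is to absorb this residual, either by a second Sylvester step applied to $Z$ or by invoking a Rota-type dilation lemma asserting that a pair $(V,T)$ linked by such an intertwiner is already jointly dilatable to a contractive pair. The shift summand $V_s$ poses a parallel difficulty, because $V_s^{-1}$ does not exist and the formal series for $Y$ must be replaced by an Abel or Ces\`aro mean adapted to the $H^{\infty}$ model; verifying that the resulting limit is both bounded and gives rise to an \emph{invertible} similarity (and not merely a bounded intertwiner) is what I expect to be the most delicate point.
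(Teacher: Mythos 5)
The paper offers no proof of Theorem~\ref{tha}; it is imported from {\cite[Corollary 4.2]{cassier}}. Judged on its own terms, your proposal has a fatal structural flaw, not just an unfinished step: the goal of Steps 2--3, namely solving the Sylvester equation $VY-YT=X$ and thereby reducing $R$ to $V\oplus T$, is unattainable in general. The paper itself points this out in the sentence immediately following Theorem~\ref{tha}: the unilateral shift $S$ of multiplicity $1$ can be written in the form \eqref{1.2} with $\mathcal M=\theta H^2$ invariant, $V=S|_{\mathcal M}$ an isometry and $T$ the model contraction on $H^2\ominus\theta H^2$, and yet $S\not\approx V\oplus T$ (already for $\theta(z)=z$ the operator $V\oplus T=S\oplus 0$ has nontrivial kernel while $S$ does not, so no invertible intertwining can exist and the Sylvester equation has no bounded solution). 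Hence the ``residual intertwiner'' $Z$ identified in your Main Obstacle is not an error term to be absorbed: eliminating it would prove the false statement $R\approx V\oplus T$. The same example defeats the hoped-for treatment of the shift summand in Step 3. (When $V$ is \emph{unitary} the decoupling is true --- that is Theorem~\ref{thb} --- but even there your Banach-limit computation leaves exactly the nonzero intertwiner $Z=\operatorname{Lim}_N V^{-N}XT^N$, and you give no argument for removing it.)

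The theorem is nevertheless provable by a short renorming argument that makes no attempt to decouple the blocks, and this is essentially what Cassier's ``generalized Toeplitz operator'' technique does. After your (correct) Step 1 one may assume that $T$ is a contraction and that $\|V^nm\|\ge c\|m\|$ for all $m\in\mathcal M$ and $n\ge0$, where $\mathcal M$ is the invariant subspace with $R|_{\mathcal M}=V$; put $C=\sup_n\|R^n\|$. Fix a Banach limit $\operatorname{Lim}$ and set
$$|x|^2=\operatorname{Lim}_n\|R^nx\|^2+\|P_{\mathcal M^\perp}x\|^2, \qquad x\in\mathcal M\oplus\mathcal M^\perp.$$
Shift-invariance of $\operatorname{Lim}$ gives $\operatorname{Lim}_n\|R^n(Rx)\|^2=\operatorname{Lim}_n\|R^nx\|^2$, and $P_{\mathcal M^\perp}R=TP_{\mathcal M^\perp}$ with $T$ a contraction gives $\|P_{\mathcal M^\perp}Rx\|\le\|P_{\mathcal M^\perp}x\|$; hence $|Rx|\le|x|$. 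Power boundedness gives $|x|^2\le(C^2+1)\|x\|^2$, while writing $x=m+h$ ($m\in\mathcal M$, $h\in\mathcal M^\perp$) and using $\|R^nx\|\ge\|V^nm\|-\|R^nh\|\ge c\|m\|-C\|h\|$ yields $|x|^2\ge\varepsilon\|x\|^2$ for some $\varepsilon>0$ depending only on $c$ and $C$. Since $|\cdot|^2$ is the quadratic form of the positive invertible operator $\Theta+P_{\mathcal M^\perp}$, where $\Theta$ is a weak-operator Banach limit of $R^{\ast n}R^n$, the operator $\Omega=(\Theta+P_{\mathcal M^\perp})^{1/2}$ is invertible and $\Omega R\Omega^{-1}$ is a contraction. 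This quadratic-form argument is the missing idea in your proposal.
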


Clearly,  an operator  $R$ from Theorem \ref{tha} is not necessary similar to $V\oplus T$. For example, if $R$ is  
a unilateral shift of  multiplicity 1, then $R$ can be represented in the form \eqref{1.2} and $R$ is not similar to  $V\oplus T$. 
On the other hand, the following well-known theorems take place. 

\begin{theoremcite} \cite{badea}\label{thb} Assume that  an operator  $U$ is similar to a unitary,  and 
$R$ is a  power bounded operator of the form $$R = \begin{pmatrix} U & \ast \\ \mathbb O & T\end{pmatrix}. $$ 
Then $R\approx U\oplus T$. \end{theoremcite}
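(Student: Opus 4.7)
The plan is to reduce the assertion to solving a Sylvester equation. Write $R=\begin{pmatrix} U & A \\ \mathbb O & T\end{pmatrix}$ on $\mathcal H\oplus\mathcal K$. Conjugating by the block-diagonal similarity $\operatorname{diag}(Y,I)$, where $YUY^{-1}$ is unitary, one may assume from the outset that $U$ itself is unitary (power boundedness of $R$ is preserved). A direct block calculation shows that $X=\begin{pmatrix} I & Z \\ \mathbb O & I\end{pmatrix}$ intertwines $R$ with $U\oplus T$ precisely when $UZ-ZT=A$, and such an $X$ is automatically invertible with inverse $\begin{pmatrix} I & -Z \\ \mathbb O & I\end{pmatrix}$. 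Hence the theorem will follow once one produces a bounded operator $Z\colon\mathcal K\to\mathcal H$ solving that equation.

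Next I would harvest approximate solutions from the power boundedness of $R$. The $(1,2)$-entry of $R^n$ is $A_n=\sum_{k=0}^{n-1}U^kAT^{n-1-k}$, and $\sup_n\|R^n\|\le M$ gives $\|A_n\|\le M$. Multiplying on the left by the isometry $U^{-(n-1)}$ and re-indexing, one obtains $S_n:=\sum_{j=0}^{n-1}U^{-j}AT^j$ with $\|S_n\|\le M$. A short telescoping computation gives $US_n-S_nT=UA-U^{-(n-1)}AT^n$, and after the harmless shift $Z_n:=U^{-1}S_n$ this cleans up to
\begin{equation*}
UZ_n-Z_nT=A-U^{-n}AT^n,\qquad \|Z_n\|\le M.
\end{equation*}

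The final step is to average away the boundary term $U^{-n}AT^n$ and pass to a cluster point. Setting $W_N:=\frac1N\sum_{n=1}^N Z_n$, one has $\|W_N\|\le M$, and since $\sum_{n=1}^N U^{-n}AT^n=S_{N+1}-A$, the identity for $Z_n$ averages to
\begin{equation*}
UW_N-W_NT = A-\tfrac{1}{N}(S_{N+1}-A),
\end{equation*}
whose right-hand side tends to $A$ in norm. By weak-operator compactness of the closed $M$-ball of $\mathcal B(\mathcal K,\mathcal H)$, the sequence $\{W_N\}$ has a WOT cluster point $Z$, and WOT-continuity of left multiplication by $U$ and right multiplication by $T$ yields $UZ-ZT=A$. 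This $Z$ provides the required similarity.

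The main obstacle is precisely this last step. A naive weak cluster point (or Banach limit) of $\{Z_n\}$ itself leaves a residue $\operatorname{(w)LIM}_n U^{-n}AT^n$ that need not be zero, and one cannot make it vanish with hypotheses on $U$ and $T$ alone; the Cesaro averaging is what converts this residue into $\tfrac{1}{N}(S_{N+1}-A)$, which is norm-small only because the full sums $S_n$ are uniformly bounded. This is exactly where the power boundedness of the whole matrix $R$ (and not just of $U$ and $T$ separately) is used in an essential way.
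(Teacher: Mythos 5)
Your argument is correct and complete, but there is nothing in the paper to compare it with: Theorem~\ref{thb} is quoted without proof, the author referring to {\cite[Corollary 2.2]{badea}} and references therein. What you have written is essentially the standard argument underlying that citation: reduce the similarity $R\approx U\oplus T$ to the commutator equation $UZ-ZT=A$ via the triangular similarity $\bigl(\begin{smallmatrix} I & Z\\ \mathbb O & I\end{smallmatrix}\bigr)$, observe that the $(1,2)$-corners $A_n=\sum_{k=0}^{n-1}U^kAT^{n-1-k}$ of $R^n$ are uniformly bounded, renormalize by the (now unitary) $U$ to get uniformly bounded approximate solutions $Z_n=U^{-n}A_n$, and extract a WOT cluster point. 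All of your computations check out, your Ces\`aro step is a legitimate way to kill the residual term $U^{-n}AT^n$, and your closing remark correctly locates where power boundedness of the whole matrix, rather than of the diagonal blocks separately, is used. One small simplification worth recording: the recursion $A_{n+1}=U^nA+A_nT$ gives the \emph{exact} identity $UZ_{n+1}-Z_nT=A$ with no error term, so applying a shift-invariant Banach limit to the bounded scalar sequences $\{\langle Z_nx,y\rangle\}$ already produces a solution $Z$ of $UZ-ZT=A$, making the averaging step optional; your version trades that shift-invariance for the norm-smallness of $\tfrac1N(S_{N+1}-A)$, which is an equally valid bookkeeping of the same cancellation.
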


\begin{theoremcite} \cite{clark}\label{thc} Assume that $V\in\mathcal B(\mathcal K)$ is similar to an isometry, $T\in\mathcal B(\mathcal H)$ is similar  to 
a contraction, $A\in\mathcal B(\mathcal K, \mathcal H)$, and $R\in\mathcal B(\mathcal H\oplus\mathcal K)$ is of the form 
$$R = \begin{pmatrix} T & A \\ \mathbb O & V\end{pmatrix}. $$ 
Then $R$ is similar to a contraction if and only if there exists $Y\in\mathcal B(\mathcal K, \mathcal H)$ such that 
$A = TY-YV$, and then $R\approx T\oplus V$, because 
$$ \begin{pmatrix} I & Y \\ \mathbb O & I\end{pmatrix}\begin{pmatrix} T & A \\ \mathbb O & V\end{pmatrix} = 
\begin{pmatrix} T & \mathbb O \\ \mathbb O & V\end{pmatrix}\begin{pmatrix} I & Y \\ \mathbb O & I\end{pmatrix}.$$\end{theoremcite}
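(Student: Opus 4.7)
The displayed matrix identity gives the ``if'' direction and the conclusion $R\approx T\oplus V$ for free, and $T\oplus V$ is similar to a contraction since each summand is. So I focus on the converse: assuming $R$ is similar to a contraction, produce $Y$ with $A=TY-YV$.

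First I would conjugate $R$ by a block-diagonal invertible $\operatorname{diag}(S_1,S_2)$; this preserves the block upper-triangular form and lets me assume that $T$ is actually a contraction and $V$ is actually an isometry. The Sylvester equation $A=TY-YV$ transforms covariantly under this conjugation, so a solution in the reduced setting pulls back. In the reduced setting $R$ is still similar to a contraction, hence power bounded, and induction gives
\[R^n=\begin{pmatrix}T^n & F_n\\ \mathbb O & V^n\end{pmatrix},\qquad F_n=\sum_{k=0}^{n-1}T^kAV^{n-1-k},\]
together with the Sylvester-type identity $TF_n-F_nV=T^nA-AV^n$. Power boundedness forces $\sup_n\|F_n\|<\infty$.

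Using $V^*V=I$, set $Y_n:=-F_nV^{*n}$. These are uniformly bounded, and a direct computation (multiplying the identity above on the right by $V^{*n}$) gives $TY_n-Y_nV=AV^nV^{*n}-T^nAV^{*n}$ up to defect terms built from $I-VV^*$. The plan is to take $Y$ as a Banach-limit (or weak-operator cluster point) of $(Y_n)$ and to pass to the limit on the right. The Wold decomposition $V=S\oplus U$ organises this: on the pure-isometric summand $S$ one has $S^{*n}\to 0$ strongly, so $T^nAS^{*n}\to 0$ in SOT and $S^nS^{*n}$ stabilises to $I$ on the growing range, contributing $A$ cleanly; the shift-part defect $I-SS^*$ poses no problem in the limit.

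The \emph{main obstacle} is the unitary summand $U$: there $U^{*n}$ has no decay, and $T^nAU^{*n}$ need not tend to zero even weakly, so the naive cluster argument collapses. This is precisely where I would use that $R$ is \emph{similar to a contraction} rather than merely power bounded—invoking the resulting bounded $H^\infty$ functional calculus of $R$ (or, equivalently, a Sz.-Nagy--Foias style averaging adapted to the contractive model of $R$) to show that an appropriate Banach limit of $(Y_n)$ still satisfies $TY-YV=A$ on the unitary summand. Conjugating the resulting $Y$ back by $\operatorname{diag}(S_1,S_2)$ yields the required $Y$ for the original $R$, at which point Theorem \ref{thc}'s displayed identity gives $R\approx T\oplus V$ automatically.
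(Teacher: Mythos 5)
First, a point of comparison: the paper does not prove Theorem~\ref{thc} at all --- it is quoted from \cite{clark}, and the reader is referred to Section~2 of that paper --- so there is no internal proof to match your argument against; I assess it on its own terms.

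Your easy direction, the reduction to $T$ a contraction and $V$ an isometry, the formula for $F_n$, and the bound $\sup_n\|F_n\|<\infty$ are all fine. The gap is in the limiting step, and you have located the difficulty on the \emph{wrong} summand of the Wold decomposition. Put $Y_n=-F_nV^{*n}$ and $Q_j=V^jV^{*j}$, the orthogonal projection onto $V^j\mathcal K$. The recursion $F_n=TF_{n-1}+AV^{n-1}$ together with $V^*V=I$ gives exactly $TY_{n-1}-Y_nV=AQ_{n-1}$. The projections $Q_j$ decrease strongly to the projection $Q_\infty$ onto $\bigcap_jV^j\mathcal K$, i.e.\ onto the \emph{unitary} summand of the Wold decomposition. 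Hence any shift-invariant Banach limit $Y$ of the bounded sequence $(Y_n)$ satisfies $TY-YV=AQ_\infty$: your scheme solves the commutator equation on the unitary summand using only power boundedness (consistent with Theorem~\ref{thb}), but on the pure isometric summand it yields $TY-YV=\mathbb O$ rather than $A$. Your assertion that $S^nS^{*n}$ ``stabilises to $I$ on the growing range'' is false there --- on the pure part these projections decrease strongly to $\mathbb O$, which is what kills the contribution of $A$.

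The pure shift summand is precisely where similarity to a contraction is indispensable: Foguel's power-bounded counterexample has exactly the form $\bigl(\begin{smallmatrix}T&A\\ \mathbb O&S\end{smallmatrix}\bigr)$ with $S$ a unilateral shift and admits no bounded solution $Y$, so no argument using only power boundedness (and in particular no variant of the averaging above) can close this case. Your proposal offers nothing concrete there; the appeal to the $H^\infty$-calculus and ``Sz.-Nagy--Foias averaging'' is aimed at the unitary summand, which does not need it. The proof in \cite{clark} (building on the projectivity of isometric Hilbert modules due to Carlson, Clark, Foias and Williams) handles the shift summand by a genuinely different mechanism --- lifting through the isometric dilation of the contraction similar to $R$ via the commutant lifting theorem --- not by taking limits of the operators $Y_n$.
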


For the proof of Theorem \ref{thb}, see {\cite[Corollary 2.2]{badea}} 
and references therein, for the proof of Theorem \ref{thc}, see, for example,  {\cite[Section 2]{clark}}.

We mention here the following simple fact.

\begin{lemma}\label{lem1.1.} Assume that $T\in\mathcal B(\mathcal H)$ is power bounded, and $\mathcal H= \mathcal M\vee\mathcal N$, where $\mathcal M$ and $\mathcal N$ 
are (closed) subspaces of $\mathcal H$ such that $\mathcal M$ and $\mathcal N$ are invariant for $T$, $T|_{\mathcal M}$ is similar to an isometry, 
and $T|_{\mathcal N}$ is stable, that is, $\|T^nx\|\to_n0$ for every $x\in\mathcal N$. Then $T\approx T|_{\mathcal M}\oplus T|_{\mathcal N}$.\end{lemma}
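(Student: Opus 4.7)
The plan is to construct an explicit similarity between $T$ and $T|_{\mathcal M}\oplus T|_{\mathcal N}$ via the natural addition map, after verifying that $\mathcal M$ and $\mathcal N$ form a topological direct sum decomposition of $\mathcal H$.

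\medskip

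\textbf{Step 1: A uniform lower bound on $T^n|_{\mathcal M}$.}
Since $T|_{\mathcal M}$ is similar to an isometry, write $T|_{\mathcal M} = Y V Y^{-1}$ with $V$ an isometry on some Hilbert space and $Y$ invertible. Then $T^n|_{\mathcal M} = Y V^n Y^{-1}$ and, using $\|V^n z\|=\|z\|$, one obtains the norm estimate
\[
 \|T^n m\|\geq \frac{\|m\|}{\|Y\|\,\|Y^{-1}\|}\qquad (m\in\mathcal M,\ n\geq 0).
\]
Call this lower constant $c>0$. In particular, no nonzero vector of $\mathcal M$ can be stable under $T$, so $\mathcal M\cap\mathcal N = \{0\}$.

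\medskip

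\textbf{Step 2: The algebraic sum $\mathcal M + \mathcal N$ is closed.}
This is the main point. Let $K = \sup_n\|T^n\|<\infty$ (power boundedness). For any $x = m + n$ with $m\in\mathcal M$, $n\in\mathcal N$, the triangle inequality and the estimates of Step~1 give
\[
 \|T^k x\|\geq \|T^k m\| - \|T^k n\|\geq c\|m\| - \|T^k n\|.
\]
Letting $k\to\infty$ and using the stability of $T|_{\mathcal N}$, one gets $\liminf_k\|T^k x\|\geq c\|m\|$, while $\|T^k x\|\leq K\|x\|$ for every $k$. Hence
\[
 \|m\|\leq \frac{K}{c}\,\|x\|,\qquad \|n\|\leq \left(1+\frac{K}{c}\right)\|x\|,
\]
so the (well-defined, by Step~1) skew projections $x\mapsto m$ and $x\mapsto n$ on $\mathcal M + \mathcal N$ are bounded. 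Consequently $\mathcal M + \mathcal N$ is closed in $\mathcal H$, and together with the hypothesis $\mathcal H = \mathcal M\vee\mathcal N$ this yields $\mathcal H = \mathcal M \dotplus \mathcal N$ as a topological direct sum.

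\medskip

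\textbf{Step 3: The similarity.}
Define $X\colon \mathcal M\oplus\mathcal N\to\mathcal H$ by $X(m,n)=m+n$. It is bounded, surjective by Step~2, and injective by Step~1; the open mapping theorem then makes it invertible. A direct computation gives
\[
 X\bigl(T|_{\mathcal M}\oplus T|_{\mathcal N}\bigr)(m,n)=T m + T n = T(m+n)=TX(m,n),
\]
so $T\approx T|_{\mathcal M}\oplus T|_{\mathcal N}$, as desired.

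\medskip

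The only substantive obstacle is Step~2; everything else is formal. The key insight is that power boundedness bounds $\|T^k x\|$ above uniformly in $k$, while the two complementary dynamical properties (bounded-below on $\mathcal M$, vanishing on $\mathcal N$) together force the two components of $x$ to be separately controlled by $\|x\|$.
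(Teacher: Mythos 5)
Your proposal is correct and follows essentially the same route as the paper: the same key estimate $K\|m+n\|\geq\|T^k(m+n)\|\geq c\|m\|-\|T^k n\|$ (letting $k\to\infty$) controls the components $m$ and $n$ by $\|m+n\|$, which is exactly the paper's argument for the invertibility of the addition map $X(m\oplus n)=m+n$. The only cosmetic difference is that you phrase this as boundedness of the skew projections and closedness of $\mathcal M+\mathcal N$, while the paper states it directly as a lower bound for $X$ and invokes the density $\mathcal H=\mathcal M\vee\mathcal N$.
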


\begin{proof} Set $C=\sup_{n\geq 0}\|T^n\|$. Since $T|_{\mathcal M}$ is similar to an isometry, there exists $c>0$ such that  
$\|T^nx\|\geq c\|x\|$ for every $x\in \mathcal M$, $n\geq 0$. Clearly, 
$$\|x+y\|\leq\|x\|+\|y\|\leq \sqrt 2 (\|x\|^2+\|y\|^2)^{1/2} =\sqrt 2 \|x\oplus y\|.$$
Define the operator $X\in\mathcal B(\mathcal M\oplus \mathcal N,\mathcal H)$ by the formula 
$X(x\oplus y)= x+y$, $x\in \mathcal M$, $y\in \mathcal N$.  Let $x\in \mathcal M$, and let $y\in \mathcal N$. We have
$$C\|x+y\|\geq \|T^n(x+y)\|\geq \|T^n x\|-\|T^n y\|\geq c\|x\|-\|T^n y\|\to_n c\|x\|.$$
Therefore, $\|x\|\leq \frac{C}{c}\|x+y\|$ for every $x\in \mathcal M$, $y\in \mathcal N$.  From the estimates
\begin{align*}\|x\|^2+\|y\|^2 & \leq (\|x+y\|+\|x\|)^2+\|x\|^2 \\ & \leq \Bigl(1+ \frac{C}{c}\Bigr)^2\|x+y\|^2  + \Bigl(\frac{C}{c}\Bigr)^2\|x+y\|^2 \\ &
  = \Bigl (1+ 2\frac{C}{c}  + 2\Bigl(\frac{C}{c}\Bigr)^2\Bigr)\|x+y\|^2 \end{align*}
we conclude that $X$ is invertible and $\|X^{-1}\|\leq \bigl(1+ 2\frac{C}{c}  + 2\bigl(\frac{C}{c}\bigr)^2\bigr)^{1/2}$. 
The relation $X(T|_{\mathcal M}\oplus T|_{\mathcal N}) = TX$ is evident. \end{proof} 

Let $R\in\mathcal B(\mathcal H)$ be a polynomially bounded operator. 
Then there exist two invariant subspaces $\mathcal H_a$ and $\mathcal H_s$ of $R$  
such that $\mathcal H = \mathcal H_a \dotplus \mathcal H_s$, $R|_{\mathcal H_a}$ is {\it  absolutely continuous (a.c.)}
and $R|_{\mathcal H_s}$ is singular. Any polynomially bounded singular operator 
is similar to a singular unitary operator. 
For an absolutely continuous polynomially bounded operator $R$ 
the  $H^\infty$-calculus is well defined, 
that is, for every $\varphi\in H^\infty$ the operator $\varphi(R)$ is defined such that 
$\|\varphi(R)\|\leq M\|\varphi\|_\infty$, 
where $M$ is a constant from the definition of polynomially boundedness of $R$. 
(Here $H^\infty$ is the algebra of bounded analytic functions in the open unit disk $\mathbb D$.)
For $\varphi\in H^\infty$ set $\widetilde\varphi(z)=\overline{\varphi(\overline z)}$, $z\in\mathbb D$. Then 
$\widetilde\varphi\in H^\infty$ and 
\begin{equation}\label{1.3} \widetilde\varphi(R^\ast) = \varphi(R)^\ast \end{equation} for every a.c. polynomially bounded operator $R$. 
For these facts on  polynomially bounded operators see \cite{mlak} or \cite{kerchy}.  
An a.c. polynomially bounded operator $R$ is called {\it of class} $C_0$, if there exists $\varphi\in H^\infty$ such that 
$\varphi\not\equiv 0$ and $\varphi(R)=\mathbb O$ \cite{bercpr}. Since an operator $R$ of class $C_0$ is quasisimilar to a contraction 
(see \cite{bercpr}), it follows from {\cite[III.4.4]{sfbk}} that there exists an inner function $\theta$ such that $\theta(R)=\mathbb O$. 

Let $\theta$ be an inner function, and let  $\theta$ have the following property: 
\begin{equation}\label{1.4}\begin{gathered} \text{if } T \text { is an  a.c. polynomially bounded operator such that } \theta(T)=\mathbb O, \\
\text{ then } T \text{ is similar to a contraction.} \end{gathered}\end{equation}
It is clear from \eqref{1.3} that $\theta$ has the property \eqref{1.4} if and only if $\widetilde\theta$  has the property \eqref{1.4}.

The following theorem is the main result of the present paper.  

\begin{theorem}\label{th1.2} Assume that  $T_0$ is an  a.c. polynomially bounded operator 
and there exists an inner function $\theta$ satisfying \eqref{1.4}  such that $\theta(T_0)=\mathbb O$,
an operator  $T_1$ is similar to a contraction, and 
$$R=\begin{pmatrix} T_0 & \ast\\  \mathbb O & T_1 \end{pmatrix}$$ is polynomially bounded. Then $R$ is similar to a contraction. \end{theorem}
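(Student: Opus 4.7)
The first step is to reduce to the case where both $T_0$ and $T_1$ are contractions: $T_1$ by hypothesis, and $T_0$ by applying the hypothesis \eqref{1.4} directly, since $T_0$ is a.c.\ polynomially bounded and annihilated by $\theta$. A block-diagonal conjugation preserves the upper-triangular form of $R$ and its polynomial boundedness, so the task becomes to find an invertible $S\in\mathcal B(\mathcal H_0\oplus\mathcal H_1)$ such that $S^{-1}RS$ is a contraction.

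Next I would pass to the $R$-invariant subspace $\mathcal N:=\ker\theta(R)$, which contains $\mathcal H_0$ since $\theta(T_0)=\mathbb O$; a direct computation with the upper-triangular block form yields $\mathcal N=\mathcal H_0\oplus\mathcal K_1$, where $\mathcal K_1\subseteq\ker\theta(T_1)$ is a $T_1$-invariant subspace. The restriction $R|_{\mathcal N}$ is a.c.\ polynomially bounded and satisfies $\theta(R|_{\mathcal N})=\mathbb O$, so hypothesis \eqref{1.4} applies again to give $R|_{\mathcal N}$ similar to a contraction. Since $\mathcal N^\perp\subseteq\mathcal H_1$, the compression $P_{\mathcal N^\perp}R|_{\mathcal N^\perp}$ is a compression of the contraction $T_1$, hence itself a contraction. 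Thus $R$ has the form $R=\begin{pmatrix}R|_{\mathcal N}&\ast\\ \mathbb O&P_{\mathcal N^\perp}R|_{\mathcal N^\perp}\end{pmatrix}$ on $\mathcal N\oplus\mathcal N^\perp$, with top block similar to a contraction and bottom block a contraction.

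To combine these, I would dilate the bottom block to its minimal isometric dilation $V$ on an enlarged space $\mathcal K\supseteq\mathcal N^\perp$ and construct an extension $\widetilde R=\begin{pmatrix}R|_{\mathcal N}&\widetilde A\\ \mathbb O&V\end{pmatrix}$ on $\mathcal N\oplus\mathcal K$, where $\widetilde A$ extends the coupling entry of $R$. Since $R|_{\mathcal N}$ is similar to a contraction and $V$ is an isometry, Theorem~\ref{thc} applies to $\widetilde R$ and shows $\widetilde R\approx R|_{\mathcal N}\oplus V$ (a contraction) as soon as the Sylvester equation $\widetilde A=R|_{\mathcal N}Y-YV$ admits a solution $Y\in\mathcal B(\mathcal K,\mathcal N)$. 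The similarity of $\widetilde R$ to a contraction then passes to $R$, because $\mathcal N\oplus\mathcal N^\perp$ is semi-invariant for $\widetilde R$ with compression equal to $R$, and complete polynomial boundedness (equivalently, similarity to a contraction, by Paulsen) is preserved under semi-invariant compression.

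The main obstacle is the construction of $\widetilde A$ and of the intertwiner $Y$ meeting the hypotheses of Theorem~\ref{thc}. This is a commutant-lifting style problem, and I expect it to be the technical heart of the argument: the hypothesis that $\theta$ satisfies \eqref{1.4} (more precisely, the $C_0(\theta)$-structure of $R|_{\mathcal N}$) is what must provide the rigidity to bound the dilated coupling in terms of the polynomial-boundedness constant of $R$ and to produce $Y$ of the required norm.
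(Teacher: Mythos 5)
Your reductions up to the decomposition $R=\bigl(\begin{smallmatrix}R|_{\mathcal N}&\ast\\ \mathbb O&P_{\mathcal N^\perp}R|_{\mathcal N^\perp}\end{smallmatrix}\bigr)$ are essentially correct (modulo one point noted below), but the plan stops exactly where the theorem begins. After this step you know only that $R$ is upper triangular with both diagonal blocks similar to contractions --- which is the general form \eqref{1.1} of Pisier's counterexample, so no conclusion can follow without further input. Your proposed way forward, dilating the bottom block to an isometry $V$ and invoking Theorem \ref{thc}, does not close the gap: Theorem \ref{thc} says that $\widetilde R$ is similar to a contraction \emph{if and only if} the Sylvester equation $\widetilde A=R|_{\mathcal N}Y-YV$ is solvable, so reducing to that equation is a restatement of the problem, not a proof. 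There is no general solvability result for such equations with an arbitrary operator similar to a contraction on the left and an isometry on the right, and the paper's Remark \ref{rem2.4} and Example \ref{ex4.1} explicitly record that the author does not know how to construct such a $Y$ directly (it is determined on $\theta(V)\mathcal K$ but not on $\mathcal K\ominus\theta(V)\mathcal K$). A secondary issue: $\theta(R)$, hence $\mathcal N=\ker\theta(R)$, is only defined once $R$ is known to be a.c.; this requires first splitting off the singular unitary part of $T_1$ (Theorem \ref{thb}) and then applying Lemma \ref{lem2.2}, which your reduction to ``$T_1$ a contraction'' does not by itself provide.

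The paper's proof avoids the Sylvester equation entirely by arranging the decomposition the other way up. One first replaces $T_1$ by its minimal isometric dilation $V$ (an a.c.\ isometry), forming the enlarged operator $R_0$ of \eqref{2.5}; Proposition \ref{prop2.5} shows this costs nothing. Then, instead of $\ker\theta(R_0)$, one takes $\mathcal M=\operatorname{clos}\theta(R_0)(\mathcal H_0\oplus\mathcal K)$, the closed \emph{range}. The operator $X=\theta(R_0)|_{\mathcal K}$ satisfies $\|Xx\|\geq\|\theta(V)x\|=\|x\|$ because $V$ is an a.c.\ isometry and $\theta$ is inner, so $X$ implements $R_0|_{\mathcal M}\approx V$: the restriction to the invariant subspace is similar to an \emph{isometry}, while the complementary compression is annihilated by $\theta$ and hence similar to a contraction by \eqref{1.4}. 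With the isometry on top, Cassier's Theorem \ref{tha} applies directly and yields similarity to a contraction with no intertwiner to construct. The essential content you are missing is precisely this pair of ideas: dilate to an isometry \emph{before} locating the invariant subspace, and use the range of $\theta(R_0)$ rather than its kernel so that Theorem \ref{tha}, rather than Theorem \ref{thc}, is the applicable black box.
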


This theorem has a simple corollary.

\begin{theorem}\label{th1.3} Assume that  $\mathcal H_0$ and $\mathcal H_1$ are Hilbert spaces, $T_0\in \mathcal B(\mathcal H_0)$,
$T_1\in \mathcal B(\mathcal H_1)$, and $A\in \mathcal B(\mathcal H_0,\mathcal H_1)$ are such that $T_0$ is of class $C_0$ 
and there exists an inner function $\theta$ satisfying \eqref{1.4} such that $\theta(T_0)=\mathbb O$,
 $T_1$ is similar to a contraction, and 
$$R=\begin{pmatrix} T_1 & A\\  \mathbb O & T_0 \end{pmatrix}$$ is polynomially bounded. Then $R$ is similar to a contraction.\end{theorem}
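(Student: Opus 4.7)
The plan is to reduce Theorem \ref{th1.3} to Theorem \ref{th1.2} by passing to the adjoint and swapping the two block coordinates, so that the upper-triangular form of the statement (with the $C_0$ block in the lower-right corner) becomes the lower-right block in an upper-triangular matrix of the type handled by Theorem \ref{th1.2}. More precisely, let $U\colon \mathcal H_0\oplus\mathcal H_1\to\mathcal H_1\oplus\mathcal H_0$ be the coordinate-flip unitary. Then $U^\ast R^\ast U$ has the upper-triangular block form
$$
S:=U^\ast R^\ast U=\begin{pmatrix} T_0^\ast & A^\ast\\ \mathbb O & T_1^\ast\end{pmatrix},
$$
and $S$ is polynomially bounded (with the same constant as $R$), because polynomial boundedness is preserved by unitary equivalence and by taking the adjoint.

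Next I would verify that $S$ satisfies the hypotheses of Theorem \ref{th1.2}. Since $T_0$ is of class $C_0$, it is a.c.\ polynomially bounded, hence so is $T_0^\ast$; this is a standard fact following from the $H^\infty$-calculus together with identity \eqref{1.3}. That same identity gives
$$
\widetilde\theta(T_0^\ast)=\theta(T_0)^\ast=\mathbb O,
$$
and it was already observed in the text that $\widetilde\theta$ satisfies \eqref{1.4} whenever $\theta$ does. Finally, if $X T_1 X^{-1}$ is a contraction then $(X^{-1})^\ast T_1^\ast X^\ast$ is its adjoint and hence also a contraction, so $T_1^\ast$ is similar to a contraction.

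With all hypotheses verified, Theorem \ref{th1.2} applied to $S$ yields that $S$ is similar to a contraction; unwinding the unitary equivalence and the adjoint then shows that $R$ itself is similar to a contraction, since similarity to a contraction is preserved by unitary equivalence and by taking adjoints. No step should present a real obstacle: the argument is essentially a bookkeeping exercise that exploits the symmetry between $\theta$ and $\widetilde\theta$ in \eqref{1.4} together with the adjoint relation \eqref{1.3}. The only point that requires (brief) care is the observation that $T_0^\ast$ is again a.c.\ polynomially bounded, which is needed for property \eqref{1.4} to be applicable to $\widetilde\theta$ and $T_0^\ast$.
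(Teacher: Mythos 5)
Your proposal is correct and follows essentially the same route as the paper: pass to the adjoint, observe that $R^\ast$ (viewed on $\mathcal H_0\oplus\mathcal H_1$) has the upper-triangular form required by Theorem \ref{th1.2} with $\widetilde\theta(T_0^\ast)=\theta(T_0)^\ast=\mathbb O$ and $\widetilde\theta$ satisfying \eqref{1.4}, and apply Theorem \ref{th1.2}. The paper's proof is just a terser version of yours, leaving the coordinate flip and the verification that $T_0^\ast$ is a.c.\ polynomially bounded implicit.
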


\begin{proof} $R^\ast$ has the following matrix representation on the space $\mathcal H_0\oplus\mathcal H_1$:
$$R^\ast=\begin{pmatrix} T_0^\ast & A^\ast\\  \mathbb O & T_1^\ast \end{pmatrix}.$$ Since $\widetilde\theta(T_0^\ast)=
\theta(T_0)^\ast=\mathbb O$ and $\widetilde\theta
$ satisfies (1.4), Theorem 1.2 can be applied to $R^\ast$. \end{proof}

It is well known that if $\theta$ is a Blaschke product with simple zeros $\{\lambda_n\}_n\subset\mathbb D$ 
satisfying the Carleson interpolating  condition (the definition is recalled in Sec. 3), and $T$ is an  a.c. polynomially bounded operator 
such that $\theta(T)=\mathbb O$, then $$T\approx\bigoplus_n \lambda_nI_{\ker(T-\lambda_nI)}$$
(see, for example, {\cite[Proof of Theorem 2.1]{cassierest}} or {\cite[Lemma 2.3]{vitsejfa}}, see also Lemma \ref{lem3.1} of the present paper and references before it). 
Therefore, $T$ is similar to a contraction. Thus, a  Blaschke product with simple zeros  
satisfying the Carleson condition has the property \eqref{1.4}. Using results from \cite{bour} and  \cite{vas}, it is easy to see that 
a finite product of Blaschke products with simple zeros  
satisfying the Carleson condition (a Carleson--Newman product) has the property \eqref{1.4}, see Theorem \ref{th3.2} below. On the other hand, by using the sequence 
of finite-dimensional operators that are uniformly polynomially bounded, but not uniformly complete polynomially bounded 
(see \cite{pisier}, \cite{dp} for the definition of the complete polynomially boundedness and the construction of such sequence), 
it is easy to construct an  a.c. polynomially bounded operator $T$ such that  $\theta(T)=\mathbb O$ 
for some  Blaschke product $\theta$ and $T$ is not similar to a contraction, see the last paragraph of Section 5 in \cite{g1}, 
see also {\cite[Theorem 7.1]{g1}}. Thus, there exist Blaschke products 
which are not satisfy \eqref{1.4}. Furthermore, it is shown in \cite{g2} that there exist 
a.c. polynomially bounded operators $T$ such that  $\theta(T)=\mathbb O$ and $T$ is not similar to a contraction, where 
$\theta(z)=\exp(a\frac{z+1}{z-1})$, $a>0$, $z\in\mathbb D$.
Author does not know whether  there exist inner functions satisfying \eqref{1.4} except Carleson--Newman products.

The paper is organized as follows. In Sec. 2, Theorem \ref{th1.2} is proved. In Sec. 3, it is shown that Carleson--Newman products   
 satisfy the condition \eqref{1.4}. 
In Sec. 4, Le Merdy's example \cite{lemerdy} is used to show that Theorem \ref{th1.2} can not be generalized to power bounded operators. 

\section{Proof of Theorem \ref{th1.2} }

The following lemmas are simple and well known.

\begin{lemma}\label{lem2.1} Assume that  $R\in\mathcal B(\mathcal H)$ is a polynomially bounded operator, and 
$\mathcal H = \mathcal H_a \dotplus \mathcal H_s$, where $\mathcal H_a$ and $\mathcal H_s$
are invariant subspaces for $R$ such that $R|_{\mathcal H_a}$ is  a.c. and $R|_{\mathcal H_s}$ is singular. 
Suppose that $\mathcal M$ is an invariant subspace for $R$ such that $R|_{\mathcal M}$ is  a.c.. Then $\mathcal M\subset \mathcal H_a$.\end{lemma}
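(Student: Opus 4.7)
The plan is to use the natural projection from the decomposition $\mathcal H=\mathcal H_a\dotplus\mathcal H_s$. Let $\pi:\mathcal H\to\mathcal H_s$ be the projection with kernel $\mathcal H_a$; since $\mathcal H_a$ and $\mathcal H_s$ are closed subspaces whose algebraic direct sum is all of $\mathcal H$, $\pi$ is bounded. Both summands being $R$-invariant, $\pi$ commutes with $R$; in particular $\pi|_{\mathcal M}$ intertwines the a.c.\ polynomially bounded operator $R|_{\mathcal M}$ with the singular polynomially bounded operator $R|_{\mathcal H_s}$. Since $\mathcal M\subset\mathcal H_a$ is equivalent to $\pi|_{\mathcal M}=\mathbb O$, the lemma reduces to the following auxiliary fact: no nonzero operator intertwines an absolutely continuous polynomially bounded operator with a singular one.

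To prove this fact, pick a similarity transforming $R|_{\mathcal H_s}$ into a singular unitary $U_s$, and replace $\pi|_{\mathcal M}$ by the resulting bounded operator $Y$ satisfying $YR|_{\mathcal M}=U_sY$, and consequently $Yp(R|_{\mathcal M})=p(U_s)Y$ for every polynomial $p$. Fix vectors $\eta$ in the domain and $\xi$ in the codomain. The functional $p\mapsto\langle p(R|_{\mathcal M})\eta,Y^\ast\xi\rangle$ on polynomials extends to a weak-$\ast$-continuous functional on $H^\infty$ by the very definition of absolute continuity for a polynomially bounded operator (see \cite{mlak}, \cite{kerchy}), and is therefore represented on polynomials as $\int_{\mathbb T}pg\,dm$ for some $g\in L^1(\mathbb T)$. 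On the other hand, the spectral theorem applied to $U_s$ writes the same quantity as $\int_{\mathbb T}p\,d\mu_{Y\eta,\xi}$ where $\mu_{Y\eta,\xi}$ is a scalar spectral measure of $U_s$, hence singular with respect to $m$. The measure $d\mu_{Y\eta,\xi}-g\,dm$ thus annihilates every polynomial and so, by density, the entire disk algebra; the F.\ and M.\ Riesz theorem forces it to be absolutely continuous, whence $\mu_{Y\eta,\xi}$ is absolutely continuous, and, being simultaneously singular, must vanish. Taking $p\equiv 1$ gives $\langle Y\eta,\xi\rangle=0$ for all $\eta,\xi$, so $Y=\mathbb O$ and consequently $\pi|_{\mathcal M}=\mathbb O$.

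The main substantive ingredient is the F.\ and M.\ Riesz theorem, which is exactly what separates absolutely continuous from singular behaviour at the level of scalar functionals on $H^\infty$; the remainder is bookkeeping with the projection $\pi$ and the weak-$\ast$ continuity of the $H^\infty$-calculus for $R|_{\mathcal M}$. It is conceivable the author instead cites this uniqueness of the a.c./singular decomposition directly from \cite{mlak} or \cite{kerchy} without reproducing the Riesz argument, in which case the proof collapses to the first paragraph plus a citation.
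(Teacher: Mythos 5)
Your proposal is correct and follows essentially the same route as the paper: the paper's proof is precisely your first paragraph, introducing the skew projection $Q$ onto $\mathcal H_s$ parallel to $\mathcal H_a$, observing that $Q|_{\mathcal M}$ intertwines the a.c.\ operator $R|_{\mathcal M}$ with the singular operator $R|_{\mathcal H_s}$, and then citing \cite{mlak} or \cite[Proposition 15]{kerchy} for the fact that such an intertwiner must vanish --- exactly the collapse you anticipated in your closing sentence. Your second paragraph supplies a correct F.\ and M.\ Riesz proof of that cited fact, which the paper does not reproduce.
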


\begin{proof} Denote by $Q$ the skew projection onto $\mathcal H_s$ 
parallel to  $\mathcal H_a$, that is, $Q\in\mathcal B(\mathcal H)$,  
$Q|_{\mathcal H_s} = I_{\mathcal H_s}$ and $Q|_{\mathcal H_a} = \mathbb O$. Then 
$Q|_{\mathcal M} R|_{\mathcal M} = R|_{\mathcal H_s} Q|_{\mathcal M}$.
Since  $R|_{\mathcal M}$ is  a.c.  and $R|_{\mathcal H_s}$ is singular,
we have $Q|_{\mathcal M} =\mathbb O$ (see \cite{mlak} or {\cite[ Proposition 15]{kerchy}}). 
 It means that $\mathcal M\subset \mathcal H_a$. \end{proof}

\begin{lemma}\label{lem2.2} Assume that  $R$ is a polynomially bounded operator, $\mathcal M$ is its invariant subspace, and $R|_{\mathcal M}$ 
and $P_{\mathcal M^\perp} R|_{\mathcal M^\perp}$ are a.c.. Then $R$ is a.c..\end{lemma}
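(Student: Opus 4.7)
The plan is to reduce the statement to the Mlak/Kérchy fact already used in the proof of Lemma~\ref{lem2.1}: there is no nontrivial bounded operator intertwining an a.c.\ polynomially bounded operator with a singular polynomially bounded one. Write $\mathcal H=\mathcal H_a\dotplus\mathcal H_s$ for the a.c./singular decomposition of $R$ and let $Q$ be the skew projection onto $\mathcal H_s$ parallel to $\mathcal H_a$, so that $Q$ commutes with $R$. The goal is to prove $\mathcal H_s=\{0\}$.

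First I would apply Lemma~\ref{lem2.1} to the hypothesis that $R|_{\mathcal M}$ is a.c.\ to conclude $\mathcal M\subset\mathcal H_a$, which is equivalent to $Q|_{\mathcal M}=\mathbb O$. Consequently, for any $h\in\mathcal H$, the decomposition $h=P_{\mathcal M}h+P_{\mathcal M^\perp}h$ gives $Qh=QP_{\mathcal M^\perp}h$, so $Q=Q|_{\mathcal M^\perp}P_{\mathcal M^\perp}$, and in particular the range of $Q|_{\mathcal M^\perp}$ is all of $\mathcal H_s$.

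The main construction is to transfer the intertwining with $R|_{\mathcal H_s}$ from $R$ itself to the compression $T:=P_{\mathcal M^\perp}R|_{\mathcal M^\perp}$. Set $X:=Q|_{\mathcal M^\perp}\in\mathcal B(\mathcal M^\perp,\mathcal H_s)$. For $y\in\mathcal M^\perp$, one has $Ry-P_{\mathcal M^\perp}Ry=P_{\mathcal M}Ry\in\mathcal M\subset\mathcal H_a$, so $Q$ kills this difference, and
\[
XTy=QP_{\mathcal M^\perp}Ry=QRy=RQy=R|_{\mathcal H_s}Xy.
\]
Thus $X$ intertwines the a.c.\ operator $T$ (a.c.\ by hypothesis) with the singular operator $R|_{\mathcal H_s}$. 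The same fact from \cite{mlak} or \cite[Proposition~15]{kerchy} cited in Lemma~\ref{lem2.1} then forces $X=\mathbb O$, and since the range of $X$ equals $\mathcal H_s$, we conclude $\mathcal H_s=\{0\}$, i.e., $R$ is a.c.

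The only nontrivial point, and the one I expect to require the most care, is verifying that the range of $X=Q|_{\mathcal M^\perp}$ really exhausts $\mathcal H_s$; this rests on the identity $Q=Q|_{\mathcal M^\perp}P_{\mathcal M^\perp}$, which in turn uses $\mathcal M\subset\mathcal H_a$ from Lemma~\ref{lem2.1}. Everything else is a direct computation with the skew projection and the compression $T$.
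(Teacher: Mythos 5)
Your proof is correct, but it follows a genuinely different route from the paper's. You stay on the side of $R$ itself: you upgrade the skew projection $Q$ (already used in Lemma~\ref{lem2.1}) to an intertwiner $X=Q|_{\mathcal M^\perp}$ between the compression $T=P_{\mathcal M^\perp}R|_{\mathcal M^\perp}$ and the singular part $R|_{\mathcal H_s}$, check $XT=R|_{\mathcal H_s}X$ via $QR=RQ$ and $Q|_{\mathcal M}=\mathbb O$, note that $X$ has full range $\mathcal H_s$ because $Q=Q|_{\mathcal M^\perp}P_{\mathcal M^\perp}$, and then invoke the Mlak/K\'erchy vanishing of intertwiners between a.c.\ and singular polynomially bounded operators. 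The paper instead passes to adjoints: from $\mathcal M\subset\mathcal H_a$ it gets $\mathcal H_a^\perp\subset\mathcal M^\perp$, identifies $R^\ast|_{\mathcal M^\perp}=(P_{\mathcal M^\perp}R|_{\mathcal M^\perp})^\ast$ as a.c.\ and $R^\ast|_{\mathcal H_a^\perp}$ as singular (the latter via the similarity $P_{\mathcal H_a^\perp}|_{\mathcal H_s}$ with $R|_{\mathcal H_s}$), and concludes $\mathcal H_a^\perp=\{0\}$ because an a.c.\ operator cannot have a nonzero singular restriction. Both arguments ultimately rest on the same dichotomy between a.c.\ and singular operators; yours has the advantage of not needing the facts that adjoints and restrictions preserve absolute continuity, at the modest cost of the extra verification (which you correctly flag and carry out) that the range of $Q|_{\mathcal M^\perp}$ exhausts $\mathcal H_s$. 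All the individual steps in your computation --- the commutation $QR=RQ$, the identity $QP_{\mathcal M^\perp}Ry=QRy$ coming from $P_{\mathcal M}Ry\in\mathcal H_a=\ker Q$, and the surjectivity of $X$ --- check out.
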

 
\begin{proof} Denote by $\mathcal H$ the space on which $R$ acts. Let $\mathcal H_a$ and $\mathcal H_s$ be as in Lemma \ref{lem2.1}. Then 
$\mathcal M\subset \mathcal H_a$, consequently, $\mathcal M^\perp\supset\mathcal H_a^\perp$. Clearly, $\mathcal M^\perp$ and $\mathcal H_a^\perp$ 
are invariant subspaces for $R^\ast$, $R^\ast|_{\mathcal M^\perp} = (P_{\mathcal M^\perp} R|_{\mathcal M^\perp})^\ast$ and 
$R^\ast|_{\mathcal H_a^\perp} = (P_{\mathcal H_a^\perp} R|_{\mathcal H_a^\perp})^\ast$. 
The operator $P_{\mathcal H_a^\perp}|_{\mathcal H_s}$ realizes the relation $P_{\mathcal H_a^\perp} R|_{\mathcal H_a^\perp}\approx R|_{\mathcal H_s}$, therefore, $P_{\mathcal H_a^\perp} R|_{\mathcal H_a^\perp}$ 
is singular (\cite{mlak} or  {\cite[ Proposition 16]{kerchy}}). Thus, $R^\ast|_{\mathcal H_a^\perp}$ is singular  (\cite{mlak} or {\cite[Proposition 14]{kerchy}}). Since 
$R^\ast|_{\mathcal M^\perp}$ is a.c., we conclude that ${\mathcal H_a^\perp} = \{0\}$. \end{proof}

The following theorem is based on Theorem \ref{tha}, which is a particular case of {\cite[Corollary 4.2]{cassier}}.

\begin{theorem}\label{th2.3} Assume that $\mathcal H_0$ and $\mathcal K$ are Hilbert spaces, $T_0\in \mathcal B(\mathcal H_0)$ is a polynomially bounded operator 
 of class $C_0$ 
and there exists an inner function $\theta$ satisfying $(1.4)$ such that $\theta(T_0)=\mathbb O$, 
$V\in \mathcal B(\mathcal K)$ is an a.c. isometry, and $A\in \mathcal B(\mathcal K,\mathcal H_0)$ is such that 
\begin{equation}\label{2.1}R=\begin{pmatrix} T_0 & A\\  \mathbb O & V \end{pmatrix}\end{equation}
 is polynomially bounded. Then $R$ is similar to a contraction.\end{theorem}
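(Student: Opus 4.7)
The plan is to exhibit a closed $R$-invariant subspace on which $R$ is similar to the isometry $V$, together with a complementary compression that is annihilated by $\theta$, and then apply Cassier's Theorem \ref{tha} after using \eqref{1.4} to handle the compression. Since $T_0$ is of class $C_0$ (and hence absolutely continuous) and $V$ is absolutely continuous by hypothesis, Lemma \ref{lem2.2} gives that $R$ is absolutely continuous, so $R$ admits an $H^\infty$ functional calculus. Because $\theta(T_0)=\mathbb O$, a direct matrix computation yields
$$\theta(R)=\begin{pmatrix}\mathbb O&B\\ \mathbb O&\theta(V)\end{pmatrix}$$
for some $B\in\mathcal B(\mathcal K,\mathcal H_0)$, so $\operatorname{range}\theta(R)=\{(By,\theta(V)y):y\in\mathcal K\}$. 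Set $\mathcal L:=\operatorname{range}\theta(R)$. Since $\theta$ is inner and $V$ is an isometry, $\theta(V)$ is an isometry on $\mathcal K$, so the map $\iota\colon\mathcal K\to\mathcal H_0\oplus\mathcal K$, $\iota y=(By,\theta(V)y)$, is bounded below by $\|y\|$ and is therefore a Banach-space isomorphism of $\mathcal K$ onto its closed range $\mathcal L$.

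Next, equating the upper-right blocks of $R\theta(R)$ and $\theta(R)R$ yields the commutation $BV=T_0B+A\theta(V)$, so
$$R\iota(y)=(T_0By+A\theta(V)y,\,V\theta(V)y)=(BVy,\,\theta(V)Vy)=\iota(Vy).$$
Hence $\mathcal L$ is $R$-invariant and $\iota^{-1}R|_{\mathcal L}\iota=V$, an absolutely continuous isometry. Now consider the compression $T':=P_{\mathcal L^\perp}R|_{\mathcal L^\perp}$. Since $\mathcal L$ is $R$-invariant, $\mathcal L^\perp$ is semi-invariant for $R$, so $\varphi(T')=P_{\mathcal L^\perp}\varphi(R)|_{\mathcal L^\perp}$ for every $\varphi\in H^\infty$, and $T'$ is absolutely continuous and polynomially bounded. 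As $\operatorname{range}\theta(R)=\mathcal L$, one has $P_{\mathcal L^\perp}\theta(R)=\mathbb O$, hence $\theta(T')=\mathbb O$; property \eqref{1.4} then gives that $T'$ is similar to a contraction.

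In the decomposition $\mathcal H_0\oplus\mathcal K=\mathcal L\oplus\mathcal L^\perp$, the operator $R$ is upper triangular with $R|_{\mathcal L}$ similar to the isometry $V$ and $T'$ similar to a contraction, so Theorem \ref{tha} applies to the polynomially bounded $R$ and gives that $R$ is similar to a contraction. The main obstacle is really the choice of the invariant subspace; once $\mathcal L:=\operatorname{range}\theta(R)$ is identified, the isometric behavior of $R|_{\mathcal L}$ is inherited from the isometry $\theta(V)$ built into $\iota$, while the inclusion $\operatorname{range}\theta(R)\subset\mathcal L$ forces $\theta(T')=\mathbb O$ on the complementary compression—providing exactly the hypotheses that \eqref{1.4} and Theorem \ref{tha} require.
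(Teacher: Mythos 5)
Your proof is correct and follows essentially the same route as the paper: you take $\mathcal L=\operatorname{range}\theta(R)$ (the paper takes its closure, but then shows the same lower bound $\|\theta(R)|_{\mathcal K}y\|\geq\|\theta(V)y\|=\|y\|$, so the two coincide), derive the identical intertwining relation $BV=T_0B+A\theta(V)$ from $R\theta(R)=\theta(R)R$ to get $R|_{\mathcal L}\approx V$, kill $\theta$ on the complementary compression, and invoke \eqref{1.4} and Theorem \ref{tha}. No gaps; this matches the paper's argument step for step.
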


\begin{proof} By Lemma \ref{lem2.2}, $R$ is a.c., therefore, $\theta(R)$  is well-defined.  Put 
$\mathcal M = \operatorname{clos}\theta(R)(\mathcal H_0\oplus\mathcal K)$. Clearly, $R\mathcal M\subset\mathcal M$ and 
$\theta(P_{\mathcal M^\perp}R|_{\mathcal M^\perp})=P_{\mathcal M^\perp}\theta(R)|_{\mathcal M^\perp}= \mathbb O$.
Since $\theta$ satisfies \eqref{1.4}, $P_{\mathcal M^\perp}R|_{\mathcal M^\perp}$ is similar to a contraction.

We will show that $R|_{\mathcal M}\approx V$. Let  $A_\theta\in \mathcal B(\mathcal K,\mathcal H_0)$ be  the operator such that
\begin{equation}\label{2.2} \theta(R)=\begin{pmatrix} \mathbb O & A_\theta\\  \mathbb O & \theta(V) \end{pmatrix}.\end{equation}
From the equation $R\theta(R)=\theta(R)R$ written in the matrix form we conclude that 
\begin{equation}\label{2.3} T_0 A_\theta + A \theta(V) = A_\theta V. \end{equation}
Put $X=\theta(R)|_{\mathcal K}$. It is clear from \eqref{2.2} that  $\mathcal M = \operatorname{clos}X\mathcal K$.
Thus,  $X\in\mathcal B(\mathcal K,\mathcal M)$, and $X$ acts by the formula $Xx = A_\theta x\oplus \theta(V)x$, $x\in\mathcal K$. 
We have $$\|Xx\|^2=\|A_\theta x\|^2 + \| \theta(V)x\|^2\geq \| \theta(V)x\|^2 = \|x\|^2$$ 
(because $\theta$ is inner and $V$ is an a.c. isometry). Therefore, $X$ is invertible. It follows from \eqref{2.3} that 
$R|_{\mathcal M}X = XV$. Thus, $X$  realizes the relation $R|_{\mathcal M}\approx V$.

We have obtained that $R$ has an invariant subspace $\mathcal M$ such that $R|_{\mathcal M}$ is similar to an isometry and 
$P_{\mathcal M^\perp}R|_{\mathcal M^\perp}$ is similar to a contraction. By Theorem \ref{tha}, $R$ is similar to a contraction. \end{proof}

\begin{remark}\label{rem2.4} 
Assume that $R$ has the form \eqref{2.1}, where  $V$ is an isometry. 
By Theorem \ref{thc}, 
$R$ is similar to  a contraction if and only if $T_0$ is similar to a contraction and there exists an 
operator $Y$ such that $A=T_0Y-YV$, and then 
\begin{equation}\label{2.4}\begin{pmatrix} I & Y\\  \mathbb O & I \end{pmatrix} R =\begin{pmatrix}  T_0 & \mathbb O\\  \mathbb O & V \end{pmatrix} 
\begin{pmatrix}  I & Y\\  \mathbb O & I \end{pmatrix}. \end{equation} 
 Now suppose that $R$ satisfies the conditions of Theorem \ref{th2.3}. Then 
there exists $Y\in \mathcal B(\mathcal K,\mathcal H_0)$ such that $A=T_0Y-YV$. Let $A_\theta$ be defined in \eqref{2.2}. 
It is easy to see from \eqref{2.4} and \eqref{2.2} that $Y\theta(V)=-A_\theta$, that is, $Y$ can be defined on $\theta(V)\mathcal K$ 
by the only way. Author does not know how  $Y$ can be defined on  $\mathcal K\ominus\theta(V)\mathcal K$, see also Example \ref{ex4.1} below. \end{remark}

\begin{proposition}\label{prop2.5} Assume that $\mathcal H_0$, $\mathcal H_1$, $\mathcal K_1$ are Hilbert spaces,  $T_0\in \mathcal B(\mathcal H_0)$,
$T_1\in \mathcal B(\mathcal H_1)$, $V_1\in \mathcal B(\mathcal K_1)$, $K\in \mathcal B(\mathcal H_1,\mathcal K_1)$, and $A\in \mathcal B(\mathcal H_1,\mathcal H_0)$.
Set $\mathcal H =\mathcal H_0\oplus\mathcal H_1$, $\mathcal K=\mathcal K_1\oplus\mathcal H_1$, 
$$V = \begin{pmatrix} V_1 & K \\  \mathbb O & T_1 \end{pmatrix}\in\mathcal B(\mathcal K), \ \ \ \ 
R = \begin{pmatrix} T_0 & A\\  \mathbb O & T_1 \end{pmatrix}\in\mathcal B(\mathcal H), $$
and
\begin{equation}\label{2.5} R_0=\begin{pmatrix} T_0 & \mathbb O &  A\\  \mathbb O & V_1 & K \\ \mathbb O & \mathbb O & T_1 \end{pmatrix}
\in\mathcal B(\mathcal H_0\oplus\mathcal K). \end{equation} 
Then $\mathcal K_1$ is an invariant subspace of $R_0$, and $R_0$ has the following form with respect to the representation 
of the space $\mathcal H_0\oplus\mathcal K$ as $\mathcal K_1\oplus\mathcal H$:
\begin{equation}\label{2.6} R_0 = \begin{pmatrix} V_1 & \ast \\  \mathbb O & R \end{pmatrix}\in\mathcal B(\mathcal K_1\oplus\mathcal H). \end{equation}

$1)$ Assume that $V$ is power bounded (polynomially bounded). Then $R$ is power bounded (polynomially bounded) if and only if 
$R_0$ is power bounded (polynomially bounded).

$2)$ Assume that $V$ is an isometry. Then $R$ is similar to a contraction if and only if 
$R_0$ is similar to a contraction.\end{proposition}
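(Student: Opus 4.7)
My plan is to exploit the two natural invariant-block decompositions that $R_0$ carries. Besides the form \eqref{2.6} coming from invariance of $\mathcal{K}_1$, the subspace $\mathcal{H}_0$ is also invariant for $R_0$ with quotient $V$. Hence both $\mathcal{H}$ and $\mathcal{K}$ are coinvariant subspaces of $R_0$, and for every polynomial $p$
\[
p(R) = P_{\mathcal{H}}\, p(R_0)|_{\mathcal{H}}, \qquad p(V) = P_{\mathcal{K}}\, p(R_0)|_{\mathcal{K}},
\]
giving $\|p(R)\|,\|p(V)\| \leq \|p(R_0)\|$. This yields the ``$R_0\Rightarrow R$'' halves of (1) immediately (with $p(z)=z^n$ for the power bounded version).

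For the converse in (1), I will write $p(R_0)$ explicitly on the three-summand decomposition $\mathcal{H}_0\oplus\mathcal{K}_1\oplus\mathcal{H}_1$: the $(1,2)$ block vanishes, the $(1,3)$ and $(2,3)$ blocks coincide with the off-diagonal blocks of $p(R)$ and $p(V)$ respectively, and the diagonal is $p(T_0), p(V_1), p(T_1)$. Expanding $\|p(R_0)(h_0,k_1,h_1)\|^2$ as a sum of three coordinate norms, the $\mathcal{H}_0$- and $\mathcal{H}_1$-summands reassemble into $\|p(R)(h_0,h_1)\|^2$, while the $\mathcal{K}_1$-summand is the first coordinate of $p(V)(k_1,h_1)$ and is thus dominated by $\|p(V)(k_1,h_1)\|^2$. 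This gives $\|p(R_0)\|^2 \leq \|p(R)\|^2+\|p(V)\|^2$, completing (1) in both the power bounded and polynomially bounded versions.

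Part (2) forward is a reduction to Theorem \ref{tha}: if $R$ is similar to a contraction then $R$ is polynomially bounded, so by (1) $R_0$ is polynomially bounded (in particular power bounded); since $V_1 = V|_{\mathcal{K}_1}$ inherits isometricity from $V$, Theorem \ref{tha} applied to \eqref{2.6} gives that $R_0$ is similar to a contraction.

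The backward direction of (2) is the core of the proof. Given $XR_0 = TX$ with $T$ a contraction on some Hilbert space $\mathcal{G}$, set $\mathcal{L} := X\mathcal{K}_1$. Invariance of $\mathcal{K}_1$ for $R_0$ yields invariance of $\mathcal{L}$ for $T$, so $S := P_{\mathcal{L}^\perp}T|_{\mathcal{L}^\perp}$ is a contraction. I will show that $Y := P_{\mathcal{L}^\perp}X|_{\mathcal{H}}\colon\mathcal{H}\to\mathcal{L}^\perp$ is an invertible intertwiner from $R$ to $S$. Invertibility is easy: $\ker Y = \mathcal{H}\cap X^{-1}\mathcal{L} = \mathcal{H}\cap\mathcal{K}_1 = \{0\}$, and since $P_{\mathcal{L}^\perp}X\mathcal{K}_1 = 0$ while $P_{\mathcal{L}^\perp}X$ is surjective onto $\mathcal{L}^\perp$, the decomposition $\mathcal{H}_0\oplus\mathcal{K} = \mathcal{K}_1\oplus\mathcal{H}$ forces $Y\mathcal{H} = \mathcal{L}^\perp$. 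For the intertwining $YR = SY$, the key observation is that for $h\in\mathcal{H}$, $R_0 h = Rh + z$ with $z\in\mathcal{K}_1$, so $TXh = XRh + Xz$ with $Xz\in\mathcal{L}$; projection onto $\mathcal{L}^\perp$ kills $Xz$, and $T$-invariance of $\mathcal{L}$ converts $P_{\mathcal{L}^\perp}TXh$ into $SYh$. I expect the verification of this intertwining identity to be the only genuinely delicate step; the isometry hypothesis on $V$ plays no role here, so this direction in fact holds under the hypotheses of (1) alone.
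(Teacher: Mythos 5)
Your proof is correct and follows essentially the same route as the paper: the same explicit block computation of $p(R_0)$ on $\mathcal H_0\oplus\mathcal K_1\oplus\mathcal H_1$ for part (1), Theorem \ref{tha} applied to the decomposition \eqref{2.6} for the forward direction of (2), and the compression-to-a-coinvariant-subspace argument for the backward direction of (2) (which the paper states in one line and you correctly spell out via $\mathcal L=X\mathcal K_1$ and $Y=P_{\mathcal L^\perp}X|_{\mathcal H}$). The only cosmetic difference is that your inequality $\|p(R_0)\|^2\le\|p(R)\|^2+\|p(V)\|^2$ packages slightly more sharply the paper's observation that every block of $p(R_0)$ is a block of $p(R)$ or of $p(V)$.
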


\begin{proof} 1) For a natural number $n$ let $K_n\in \mathcal B(\mathcal H_1,\mathcal K_1)$ and $A_n\in \mathcal B(\mathcal H_1,\mathcal H_0)$
be operators such that 
$$V^n = \begin{pmatrix} V_1^n & K_n \\ \mathbb O & T_1^n \end{pmatrix} \ \ \text{ and } \ \ 
R^n = \begin{pmatrix} T_0^n & A_n\\  \mathbb O & T_1^n \end{pmatrix}. $$
Easy calculation shows that 
$$R_0^n=\begin{pmatrix} T_0^n & \mathbb O &  A_n \\  \mathbb O & V_1^n & K_n \\ \mathbb O & \mathbb O & T_1^n \end{pmatrix}.$$
Clearly, $V$ is power bounded if and only if the norms of operators from matrix representation of $V^n$
are bounded uniformly by $n$, and the same is true for $R$ and $R_0$. Now the statements on power boundedness follows.  
The statement on polynomially boundedness is proved similarly. 

2)  Assume that $V$ is an isometry, then $V_1$ is an isometry, because $V_1=V|_{\mathcal K_1}$. If $R$ is similar to a contraction, 
then $R_0$ is similar to a contraction by Theorem \ref{tha} and \eqref{2.6}. 
If $R_0$ is similar to a contraction, then $R$ is similar to a contraction, because $R$ is the compression of $R_0$ on
its coinvariant subspace. \end{proof}

\smallskip

{\bf Proof of Theorem \ref{th1.2}.} Without loss of generality, we can suppose that $T_1$ is a completely nonunitary contraction. Indeed, 
let $T_2$ be a contraction and let $Y$ be an invertible operator such that $YT_1Y^{-1}=T_2$. Then 
$$R_1:=\begin{pmatrix} I & \mathbb O \\  \mathbb O & Y \end{pmatrix}  R  \begin{pmatrix} I & \mathbb O \\  \mathbb O & Y^{-1}\end{pmatrix}
 = \begin{pmatrix} T_0 & \ast \\  \mathbb O & T_2 \end{pmatrix}.$$
Furthermore, $T_2=T_3\oplus U$, where $T_3$ is a completely nonunitary contraction and 
$U$ is a unitary operator. Then 
$$R_1=\begin{pmatrix} T_0 & A_1 &  A_2 \\  \mathbb O & T_3 & \mathbb O \\ \mathbb O & \mathbb O & U \end{pmatrix},$$
where $A_1$ and $A_2$ are appropriate operators.
By Theorem \ref{thb} (applied to $R_1^\ast$), $R_1$ is similar to a contraction if and only if 
$\begin{pmatrix} T_0 & A_1 \\  \mathbb O & T_3 \end{pmatrix}$ 
is similar to a contraction. 

Thus, suppose that $R=\begin{pmatrix} T_0 & A \\ \mathbb O & T_1 \end{pmatrix}$, where $\theta(T_0)=\mathbb O$ and 
$T_1$ is a completely nonunitary contraction. Denote by $V$ the minimal isometric dilation of $T_1$.
Since $T_1$ is  completely nonunitary, $V$ is a.c. (see {\cite[Theorems I.4.1 and II.6.4]{sfbk}}). 
We have $V=\begin{pmatrix} V_1 & K \\  \mathbb O & T_1 \end{pmatrix}$ 
for some operators $V_1$ and $K$. Define $R_0$ by \eqref{2.5}. By Proposition \ref{prop2.5} (1), $R_0$ is polynomially bounded. 
By Theorem 2.3, $R_0$ is similar to a contraction. By Proposition  \ref{prop2.5} (2), $R$ is similar to a contraction. 

\section{On functions satisfying the condition \eqref{1.4}}
 
In this section, we show that inner functions satisfying the condition \eqref{1.4} exist (Theorem \ref{th3.2}). Also, we show that 
the finite product of functions satisfying the condition \eqref{1.4} satisfies the condition \eqref{1.4} (Proposition \ref{prop3.3}).

Recall the definitions. Let $\mathbb D$ be the open unit disc. For $\lambda\in\mathbb D$ set 
$b_\lambda(z)= \frac{|\lambda|}{\lambda}\frac{\lambda-z}{1-\overline\lambda z}$, $z\in\mathbb D$.
It is well known that if $\{\lambda_n\}_n\subset\mathbb D$ and $\sum_n(1-|\lambda_n|)<\infty$, then the product $B=\prod_nb_{\lambda_n}$
converges and is called a {\it Blaschke product}. Set $B_n=\prod_{k\neq n}b_{\lambda_k}$. The sequence $\{\lambda_n\}_n\subset\mathbb D$  
satisfies the {\it Carleson interpolating condition} (the Carleson condition for brevity), if $\inf_n|B_n(\lambda_n)|>0$. 
Assume that $\{\theta_n\}_n$ is a sequence of inner functions such that the product $\theta=\prod_n\theta_n$ converges. The sequence   
$\{\theta_n\}_n$ satisfies the {\it generalized Carleson condition}, if   there exists $\delta>0$ such that 
$|\theta(z)|\geq \delta\inf_{n\in\mathbb N} |\theta_n(z)|$ for every $z\in \mathbb D$. It is well known that the sequence 
$\{\lambda_n\}_n\subset\mathbb D$ satisfies the Carleson condition if and only if the sequence $\{b_{\lambda_n}\}_n$ 
satisfies  the generalized  Carleson condition, see, for example, {\cite[\S IX.3]{nik86}} or {\cite[Lemma II.C.3.2.18]{nik02}}.      

The following lemma is a straightforward consequence of {\cite[Sec. 3.4]{vas}}, see also {\cite[\S IX.2, IX.3]{nik86}} or 
{\cite[Theorems II.C.3.1.4 and II.C.3.2.14]{nik02}}. See also {\cite[Theorem 4.4]{clouatre}} for another proof. We sketch the proof of this lemma, because 
results in \cite{vas} are formulated for the compressions of the unilateral shift of multiplicity $1$ only.

\begin{lemma}\label{lem3.1} Assume that $\theta=\prod_{n\in\mathbb N}\theta_n$, where $\{\theta_n\}_{n\in\mathbb N}$ is a sequence of 
inner functions satisfying the generalized Carleson condition. Assume that $T$ is an a.c. polynomially 
bounded operator such that $\theta(T)=\mathbb O$. Then $T\approx \oplus_{n\in\mathbb N}T|_{\ker\theta_n(T)}$.\end{lemma}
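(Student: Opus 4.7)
The plan is to construct a bounded invertible intertwiner $X\colon\bigoplus_n\ker\theta_n(T)\to\mathcal H$ realizing the similarity, by using the $H^\infty$-calculus of the a.c.\ polynomially bounded operator $T$ to transport the Vasyunin-type decomposition of the model space $K_\theta=H^2\ominus\theta H^2$ to $\mathcal H$. The hypothesis $\theta(T)=\mathbb O$ will allow me to turn any $H^\infty$-identity modulo $\theta H^\infty$ into a genuine operator identity on $\mathcal H$.

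First I would extract, from the generalized Carleson condition on $\{\theta_n\}$ via the results of \cite{vas} (together with the reformulations in \cite[\S IX.3]{nik86} and \cite[II.C.3]{nik02}), a system of $H^\infty$-functions $\{\varphi_n\}$ satisfying (i) $\varphi_n=(\theta/\theta_n)g_n$ for some $g_n\in H^\infty$, (ii) $1-\varphi_n\in \theta_n H^\infty$, and (iii) the finite partial sums $\sigma_F=\sum_{n\in F}\varphi_n$ are uniformly bounded in $H^\infty$ and converge weak-$*$ to $1$ modulo $\theta H^\infty$ as $F\uparrow\mathbb N$. The $\{\varphi_n\}$ play the role of a bounded partition of unity modulo $\theta H^\infty$ whose $n$-th piece is divisible by $\theta/\theta_n$.

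Setting $E_n=\varphi_n(T)$, properties (i) and (ii) together with $\theta(T)=\mathbb O$ will give $\theta_n(T)E_n=g_n(T)\theta(T)=\mathbb O$ (so $\operatorname{Ran}E_n\subset\mathcal M_n:=\ker\theta_n(T)$), $E_n|_{\mathcal M_n}=I$, and $E_nE_m=\mathbb O$ for $m\neq n$ (since $\theta_m$ divides $\theta/\theta_n$); hence each $E_n$ is a skew projection onto $\mathcal M_n$ annihilating the $\mathcal M_m$ for $m\neq n$. Property (iii) combined with polynomial boundedness of $T$ will yield $\sup_F\|\sum_{n\in F}E_n\|\le MC$ and, via weak-$*$-to-SOT continuity of the $H^\infty$-calculus, $\sum_{n\in F}E_n\to I$ strongly. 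Then $X((x_n)):=\sum_n x_n$ and $X^{-1}x:=(E_nx)_n$ will be mutually inverse bounded operators, $X$ will intertwine $\bigoplus_n T|_{\mathcal M_n}$ with $T$ since each $\mathcal M_n$ is $T$-invariant, and the lemma will follow.

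I expect the main obstacle to be the first step: extracting the bounded $H^\infty$-partition of unity $\{\varphi_n\}$ modulo $\theta H^\infty$. In \cite{vas} the corresponding statements are formulated directly for the compression of the shift on $K_\theta$, so the work consists in rereading them as $H^\infty$-identities modulo $\theta H^\infty$; the paper's own parenthetical remark about \cite{vas} flags exactly this as the reason a sketch is needed. A secondary concern is justifying the weak-$*$-to-SOT passage for the $H^\infty$-calculus of a merely polynomially bounded (not contractive) $T$, but this is standard from uniform boundedness and the Poisson representation of the calculus (\cite{mlak}, \cite{kerchy}).
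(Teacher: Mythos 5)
Your proposal is correct and follows essentially the same route as the paper: both transfer the Vasyunin--Nikolski interpolation content of the generalized Carleson condition through the $H^\infty$-calculus of $T$, using $\theta(T)=\mathbb O$ to turn congruences modulo $\theta_n H^\infty$ into operator identities on $\mathcal H$. The only cosmetic difference is that the paper interpolates an arbitrary $\{\mu_n\}\in\ell^\infty$ by a single $\varphi\in H^\infty$ and then invokes the abstract criterion (boundedness of all $\ell^\infty$-multipliers implies the subspaces $\ker\theta_n(T)$ form an unconditional basis), whereas you construct the skew projections $E_n=\varphi_n(T)$ explicitly and verify the unconditional decomposition by hand.
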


\begin{proof} Denote by $\mathcal H$ the space on which $T$ acts, and set $\mathcal H_n=\ker\theta_n(T)$. 
Let $\{\mu_n\}_{n\in\mathbb N}\in\ell^\infty$. Then there exists $\varphi\in H^\infty$ such that $\varphi-\mu_n\in\theta_n H^\infty$
for all $n\in\mathbb N$ ({\cite[Theorem II.C.3.2.14]{nik02}}, or {\cite[Theorems 2.3 and 3.4]{vas}}, or {\cite[\S IX.2 and \S IX.3]{nik86}}).
 Clearly, $\varphi(T)$ is a bounded operator, and 
$\varphi(T)|_{\mathcal H_n}=\mu_nI_{\mathcal H_n}$ for all $n\in\mathbb N$. 
We obtain that for every $\{\mu_n\}_{n\in\mathbb N}\in\ell^\infty$ the operator
$$ \sum_{n\in\mathbb N}x_n\mapsto \sum_{n\in\mathbb N}\mu_n x_n, \ \ \ \text{ where }  \  x_n\in \mathcal H_n,$$ 
is bounded.
 By {\cite[Theorem II.C.3.1.4]{nik02}}, 
or {\cite[Theorem 2.1]{vas}}, or {\cite[\S VI.4]{nik86}}, the mapping 
$$X\colon\mathcal H\to\bigoplus_{n\in\mathbb N}\mathcal H_n, \ \ \  \sum_{n\in\mathbb N}x_n\mapsto \bigoplus_{n\in\mathbb N}x_n , 
\ \  x_n\in \mathcal H_n,$$ 
is bounded and invertible. Clearly, $XT=(\bigoplus_{n\in\mathbb N}T|_{\mathcal H_n})X$, that is, 
$T\approx \bigoplus_{n\in\mathbb N}T|_{\mathcal H_n}$. \end{proof}

\begin{theorem}\label{th3.2} Assume that $N$ is a natural number, $B_1, \ldots, B_N$ are Blaschke products with simple zeros satifying the 
Carleson condition,
and $\theta = B_1 \cdots  B_N$ (that is, $\theta$ is a Carleson--Newman product). Then $\theta$ satisfies \eqref{1.4}. \end{theorem}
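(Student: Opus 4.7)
The plan is to induct on $N$, using Lemma \ref{lem3.1} to handle $N=1$ and Theorem \ref{th1.2} to do the recursive work.

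For the base case $N=1$, $\theta = B_1 = \prod_n b_{\lambda_n}$ is a Blaschke product with simple zeros satisfying the Carleson condition. As recalled before Lemma \ref{lem3.1}, this is equivalent to $\{b_{\lambda_n}\}_n$ satisfying the generalized Carleson condition, so Lemma \ref{lem3.1} gives $T \approx \bigoplus_n \lambda_n I_{\ker(T-\lambda_n I)}$. The right-hand side is a normal operator with spectrum in $\mathbb{D}$, hence a contraction, so $T$ is similar to a contraction.

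For the inductive step $N \geq 2$, assume the result for Carleson--Newman products of fewer than $N$ factors. Write $\theta = \psi \phi$ with $\psi = B_1 \cdots B_{N-1}$ and $\phi = B_N$, and let $T$ be an a.c.\ polynomially bounded operator on $\mathcal{H}$ with $\theta(T) = \mathbb{O}$. Set $\mathcal{M} = \operatorname{clos}\phi(T)\mathcal{H}$; this is $T$-invariant. The identity $\psi(T)\phi(T) = \theta(T) = \mathbb{O}$ shows $\psi(T)$ vanishes on the range of $\phi(T)$, hence on $\mathcal{M}$ by continuity, so $\psi(T|_{\mathcal{M}}) = \mathbb{O}$. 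Since $T|_{\mathcal{M}}$ is a.c.\ polynomially bounded and $\psi$ satisfies \eqref{1.4} by the inductive hypothesis, $T|_{\mathcal{M}}$ is similar to a contraction. Dually, $\phi(T)\mathcal{H} \subset \mathcal{M}$ gives $P_{\mathcal{M}^\perp}\phi(T)|_{\mathcal{M}^\perp} = \mathbb{O}$, which equals $\phi(P_{\mathcal{M}^\perp} T|_{\mathcal{M}^\perp})$ by the compression property of the $H^\infty$-calculus on coinvariant subspaces; applying the base case to $\phi$ gives that $P_{\mathcal{M}^\perp} T|_{\mathcal{M}^\perp}$ is similar to a contraction.

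Now $T$ is represented with respect to $\mathcal{H} = \mathcal{M} \oplus \mathcal{M}^\perp$ as $\begin{pmatrix} T|_{\mathcal{M}} & \ast \\ \mathbb{O} & P_{\mathcal{M}^\perp} T|_{\mathcal{M}^\perp} \end{pmatrix}$, whose upper-left block is a.c.\ polynomially bounded and annihilated by $\psi$ (which satisfies \eqref{1.4} by induction), and whose lower-right block is similar to a contraction. Since $T$ itself is polynomially bounded, Theorem \ref{th1.2} applies and yields that $T$ is similar to a contraction, closing the induction. The only technical point requiring care is the identity $\phi(P_{\mathcal{M}^\perp} T|_{\mathcal{M}^\perp}) = P_{\mathcal{M}^\perp}\phi(T)|_{\mathcal{M}^\perp}$, which follows from the obvious analogue for polynomials by norm-continuity of the $H^\infty$-calculus and is used implicitly in the proof of Theorem \ref{th2.3}; beyond this, the argument is a direct combination of Lemma \ref{lem3.1} (base case) with Theorem \ref{th1.2} (inductive step).
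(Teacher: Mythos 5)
Your proof is correct, but it is not the route the paper takes for Theorem \ref{th3.2}; it is, almost verbatim, the \emph{alternative} proof that the paper itself records afterwards in Proposition \ref{prop3.3} and Remark \ref{rem3.4}. The paper's actual proof of Theorem \ref{th3.2} applies Lemma \ref{lem3.1} only once, but to a different factorization: by Vasyunin's theorem a Carleson--Newman product $\theta=B_1\cdots B_N$ can be rewritten as $\theta=\prod_n\theta_n$ where the $\theta_n$ are \emph{finite} Blaschke products of degree at most $N$ satisfying the generalized Carleson condition; Lemma \ref{lem3.1} then splits $T$ into $\bigoplus_n T_n$ with each $T_n$ annihilated by a polynomial of degree at most $N$, and Bourgain's theorem supplies invertible $X_n$ with $\|X_n\|\|X_n^{-1}\|\leq C(N)$ making each $X_nT_nX_n^{-1}$ a contraction. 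Your induction (base case via Lemma \ref{lem3.1} for a single interpolating Blaschke product, inductive step via the decomposition $\mathcal M=\operatorname{clos}\phi(T)\mathcal H$ and Theorem \ref{th1.2}) is sound --- the two technical points you flag, namely $\psi(T|_{\mathcal M})=\psi(T)|_{\mathcal M}$ and $\phi(P_{\mathcal M^\perp}T|_{\mathcal M^\perp})=P_{\mathcal M^\perp}\phi(T)|_{\mathcal M^\perp}$, are exactly the identities used in the proof of Theorem \ref{th2.3}, and your use of range closures rather than the kernel $\ker\psi(T)$ used in Proposition \ref{prop3.3} is an inessential variant. The trade-off, which Remark \ref{rem3.4} makes explicit, is quantitative: your argument funnels through Theorem \ref{th1.2} and hence through Cassier's Theorem \ref{tha}, so it yields no control on the norm of the similarity, whereas the paper's proof via Vasyunin and Bourgain gives a similarity constant depending only on $N$, the Carleson constants, and the polynomial bound of $T$. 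One cosmetic slip: in the base case the operator $\bigoplus_n\lambda_nI$ is diagonal with $\sup_n|\lambda_n|<1$ on each block, so it is a contraction directly; calling it ``normal with spectrum in $\mathbb D$'' is harmless but the relevant point is simply that its norm is $\sup_n|\lambda_n|\leq 1$.
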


\begin{proof} By {\cite[Theorem 5.5]{vas}}, see also {\cite[\S IX.5]{nik86}} or {\cite[II.C.3.3.5(c) (ii), (iv)]{nik02}}, 
$\theta=\prod_n\theta_n$, where $\{\theta_n\}_n$ satisfies the generalized  Carleson condition, and 
$\theta_n$  is a finite Blashke product with $\deg\theta_n\leq N$ for every $n$, where $\deg \vartheta$ is the quantity of zeros of a finite 
Blaschke product $\vartheta$ accounted with their multiplicities. 

Let $T$ be an a.c. polynomially bounded operator such that $\theta(T)=\mathbb O$. Set $T_n=T|_{\ker\theta_n(T)}$. 
Since $\{\theta_n\}_n$ satisfies the generalized  Carleson condition,  by Lemma \ref{lem3.1} we obtain that 
\begin{equation}\label{3.1} T\approx\oplus_n T_n. \end{equation} 

Let $n$ be fixed. Since $\theta_n(T_n)=\mathbb O$ and $\deg\theta_n\leq N$, there exists a polynomial $p_n$ such that 
$p_n(T_n)=\mathbb O$ 
and $\deg p_n\leq N$. By {\cite[Theorem 2]{bour}}, there exist a constant $C>0$ which depends on $N$ only and an invertible operator $X_n$ such that 
\begin{equation}\label{3.2}\|X_n\|\|X_n^{-1}\|\leq C \ \ \text{ and } \ \ R_n:=X_nT_nX_n^{-1} \ \text{ is a contraction.} \end{equation} 
Note that {\cite[Theorem 2]{bour}} is formulated for an operator 
$T\in\mathcal B(\mathcal H)$, where $\mathcal H$ is a Hilbert space with $\dim\mathcal H\leq N$, but the condition used in the proof is that 
there exists a polynomial $p$ such that $p(T)=\mathbb O$ and $\deg p\leq N$. Indeed, if $T\in\mathcal B(\mathcal H)$ with $\dim\mathcal H\leq N$,
then there exists a polynomial $p$ such that $p(T)=\mathbb O$ and $\deg p\leq N$, but the converse is not true. Thus, {\cite[Theorem 2]{bour}}
is formulated in the weaker form that it is proved. 
 
The conclusion of the theorem follows from \eqref{3.1} and \eqref{3.2}. \end{proof}

\begin{proposition}\label{prop3.3} Assume that inner functions  $\theta_0$ and $\theta_1$ satisfy \eqref{1.4}. 
Then $\theta_0\theta_1$ satisfies \eqref{1.4}.\end{proposition}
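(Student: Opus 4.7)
The plan is to reduce Proposition \ref{prop3.3} to Theorem \ref{th1.2} by splitting any operator annihilated by $\theta_0\theta_1$ into an upper-triangular form whose diagonal blocks are annihilated separately by $\theta_0$ and $\theta_1$. So let $T\in\mathcal B(\mathcal H)$ be an a.c.\ polynomially bounded operator with $(\theta_0\theta_1)(T)=\mathbb O$; I will show that $T$ is similar to a contraction.

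Set $\mathcal M=\operatorname{clos}\theta_1(T)\mathcal H$. Since $T$ commutes with $\theta_1(T)$, $\mathcal M$ is invariant for $T$, and with respect to $\mathcal H=\mathcal M\oplus\mathcal M^\perp$ the operator $T$ takes the block form
$$T=\begin{pmatrix} T_0 & \ast\\ \mathbb O & T_1 \end{pmatrix},\qquad T_0=T|_{\mathcal M},\quad T_1=P_{\mathcal M^\perp}T|_{\mathcal M^\perp}.$$
Both $T_0$ and $T_1$ are a.c.\ polynomially bounded: polynomial boundedness passes to restrictions and compressions with the same constant, and absolute continuity passes with them because an a.c.\ polynomially bounded operator has no nonzero singular invariant subspace, as can be read off Lemma \ref{lem2.1} applied to $T$ and to $T^\ast$.

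The two key verifications are $\theta_0(T_0)=\mathbb O$ and $\theta_1(T_1)=\mathbb O$. The first is immediate from $\theta_0(T)\theta_1(T)=(\theta_0\theta_1)(T)=\mathbb O$: the operator $\theta_0(T)$ kills $\theta_1(T)\mathcal H$, hence, by continuity, all of $\mathcal M$, so $\theta_0(T_0)=\theta_0(T)|_{\mathcal M}=\mathbb O$. For the second, $\mathcal M$-invariance makes the $H^\infty$-calculus of $T$ upper-triangular with $(2,2)$-entry $\varphi(T_1)$ for every $\varphi\in H^\infty$; applying this with $\varphi=\theta_1$, the inclusion $\theta_1(T)\mathcal H\subset\mathcal M$ forces that $(2,2)$-entry to vanish, whence $\theta_1(T_1)=\mathbb O$.

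Since $\theta_1$ satisfies \eqref{1.4}, $T_1$ is similar to a contraction; and $\theta_0$ satisfies \eqref{1.4} with $\theta_0(T_0)=\mathbb O$ for the a.c.\ polynomially bounded $T_0$. As $T$ itself is polynomially bounded by hypothesis, Theorem \ref{th1.2} applies to the displayed triangular form (with $\theta=\theta_0$) and yields that $T$ is similar to a contraction, which is the claim. The one technical point that warrants care is the compatibility $\varphi(P_{\mathcal M^\perp}T|_{\mathcal M^\perp})=P_{\mathcal M^\perp}\varphi(T)|_{\mathcal M^\perp}$ for $\varphi\in H^\infty$, which is evident on polynomials and inherited by the a.c.\ polynomially bounded $H^\infty$-calculus recalled in the introduction; beyond this the argument is just bookkeeping around Theorem \ref{th1.2}.
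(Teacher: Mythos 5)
Your argument is correct and follows essentially the same route as the paper: split $\mathcal H$ along an invariant subspace so that $\theta_0$ annihilates the restriction and $\theta_1$ annihilates the compression to the orthogonal complement, invoke \eqref{1.4} for $\theta_1$ to get $T_1$ similar to a contraction, and finish with Theorem \ref{th1.2}. The only (immaterial) difference is your choice of $\mathcal M=\operatorname{clos}\theta_1(T)\mathcal H$ where the paper uses $\ker\theta_0(T)$; since $\operatorname{clos}\theta_1(T)\mathcal H\subset\ker\theta_0(T)$, both choices serve identically.
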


\begin{proof} Let $T\in\mathcal B(\mathcal H)$ be an a.c. polynomially bounded operator such that $(\theta_0\theta_1)(T)=\mathbb O$. 
Put $\mathcal H_0=\ker\theta_0(T)$, $\mathcal H_1=\mathcal H\ominus\mathcal H_0$,  $T_0=T|_{\mathcal H_0}$, and $T_1=P_{\mathcal H_1}T|_{\mathcal H_1}$.
 Clearly, $\theta_0(T_0)=\mathbb O$ and $\theta_1(T_1)=\mathbb O$. By \eqref{1.4}, $T_1$ is similar to a contraction. By Theorem 1.2, 
$T$ is similar to a contraction. \end{proof}

\begin{remark}\label{rem3.4} Proposition \ref{prop3.3} gives another proof of Theorem \ref{th3.2}. 
Indeed, if $B=B_1\cdots B_N$, where $B_n$ are 
Blaschke products with simple zeros satisfying the Carleson condition for $1\leq n\leq N$, and $N<\infty$, then $B$ 
satisfies the condition \eqref{1.4}, because $B_n$ satisfy the condition \eqref{1.4}. This proof does not use the results from \cite{bour} 
 and \cite{vas}, 
but this proof gives no estimates of the norm of an operator which realizes the similarity of a polynomially bounded operator 
to a contraction, while \cite{bour}  and \cite{vas}  contain estimates of these norms. \end{remark}

Author does not know another examples of functions satisfying the condition \eqref{1.4} except functions from the condition of 
Theorem  \ref{th3.2}.

\section{Examples.}

In this section, we consider the attempt of straightforward construction of operator which intertwines a polynomially bounded 
operator from Theorem \ref{th2.3} with a contraction, see Remark \ref{rem2.4}. Also, using the example by Le Merdy \cite{lemerdy}, 
we show that the results for 
polynomially bounded operators of the present paper can not be extended to power bounded operators. 

\subsection{On construction of intertwining operators.}

\begin{example}\label{ex4.1}  
 Assume that $\theta$ is an inner function satisfying \eqref{1.4}, and 
$V\in \mathcal B(\mathcal K)$ is an a.c. isometry. Set $\mathcal K_1=\theta(V)\mathcal K$,  $\mathcal H_1=\mathcal K\ominus\mathcal K_1$, 
$V_1=V|_{\mathcal K_1}$, $T_1=P_{\mathcal H_1}V|_{\mathcal H_1}$, $K=P_{\mathcal K_1}V|_{\mathcal H_1}$. 
Assume that $T_0\in \mathcal B(\mathcal H_0)$ and $A\in\mathcal B(\mathcal H_1,\mathcal H_0)$ are such that  
 $$R=\begin{pmatrix} T_0 & A\\  \mathbb O & T_1 \end{pmatrix}$$ is polynomially bounded and $\theta(R)=\mathbb O$.
Define $R_0$ by the formula \eqref{2.5}. By Proposition \ref{prop2.5} (1), $R_0$ is polynomially bounded. Note that $R_0$ has the form  
$$R_0=\begin{pmatrix} T_0 & \ast \\  \mathbb O & V \end{pmatrix}.$$
 By Theorem \ref{th2.3},  $R_0$ is similar to a contraction. 
Let $Y_1\in\mathcal B(\mathcal K,\mathcal H_0)$ be an operator from Remark \ref{rem2.4} applied to $R_0$. Taking into account that 
$\theta(R)=\mathbb O$ and \eqref{2.4} (for $R_0$ and $Y_1$), it is easy to conclude that 
$Y_1|_{\mathcal K_1}=\mathbb O$. Set $Y=Y_1|_{\mathcal H_1}$. 
Then 
\begin{equation}\label{4.1} A=T_0Y-YT_1, \ \text{ and } \ \ \begin{pmatrix} I & Y\\  \mathbb O & I \end{pmatrix} R =\begin{pmatrix} T_0 & \mathbb O\\  \mathbb O & T_1 
\end{pmatrix} \begin{pmatrix} I & Y\\  \mathbb O & I \end{pmatrix}. \end{equation} 

Now suppose that $\theta=B=\prod_nb_{\lambda_n}$ is a Blaschke product with simple zeros $\{\lambda_n\}_n$ 
(here $b_\lambda(z)= \frac{|\lambda|}{\lambda}\frac{\lambda-z}{1-\overline\lambda z}$, $z$, $\lambda\in\mathbb D$), and
$V$ is the unilateral shift of multiplicity 1, that is, the operator of multiplication by the independent variable 
on the Hardy space $H^2(\mathbb D)$. Set $B_n=\prod_{k\neq n}b_{\lambda_k}$ and 
$k_n(z)=\frac{(1-|\lambda_n|^2)^{1/2}}{1-\overline\lambda_n z}B_n(z)$, $z\in \mathbb D$. Then $\mathcal H_1=\vee_n k_n$, 
$\|k_n\|=1$  and $T_1k_n=\lambda_n k_n$. Furthermore, assume that $\{e_n\}_n$ is an orthonormal basis in $\mathcal H_0$, and 
$T_0 e_n=\lambda_n e_n$. Set $a_{jn}=(Ak_n, e_j)$ for all indices $n$, $j$. For $\varphi\in H^\infty$ put 
$A_\varphi=P_{\mathcal H_0}\varphi(R)|_{\mathcal H_1}$. It is easy to see that 
\begin{equation}\label{4.2} (A_\varphi k_n, e_j)=\frac{\varphi(\lambda_n)-\varphi(\lambda_j)}{\lambda_n-\lambda_j}a_{jn}, \ \ j\neq n, \ \ \ \
(A_\varphi k_n, e_n)=\varphi^\prime(\lambda_n)a_{nn}. \end{equation} 
Since $A_B=\mathbb O$ and $B^\prime(\lambda_n)\neq 0$ for every $n$, we conclude from (4.2) that $a_{nn}=0$ for every $n$. 
Therefore, 
\begin{equation}\label{4.3} (A_\varphi k_n, e_n)=0 \ \ \text{  for every } \varphi\in H^\infty.  \end{equation} 
Also, 
\begin{equation}\label{4.4} R(A_{B_n/B_n(\lambda_n)}k_n\oplus k_n)=\lambda_n(A_{B_n/B_n(\lambda_n)}k_n\oplus k_n). \end{equation} 
Let $Y\in\mathcal B(\mathcal H_1,\mathcal H_0)$ satisfy (4.1). Then 
\begin{equation}\label{4.5} A_\varphi=\varphi(T_0)Y-Y\varphi(T_1)\ \  \text{ for every } \ 
\varphi\in H^\infty.  \end{equation} 
Set $\alpha_n=(Yk_n,e_n)$. It is easy to see from \eqref{4.3} and \eqref{4.5} applied with $\varphi=B_n/B_n(\lambda_n)$ that   
\begin{equation}\label{4.6} Yk_n=-A_{B_n/B_n(\lambda_n)}k_n+\alpha_n e_n  \ \ \text{ for every } n. \end{equation} 

Now suppose that $\{\lambda_n\}_n$ satisfies the Carleson condition  (the definition is recalled in Sec. 3).  By Lemma \ref{lem3.1},  
  $\{k_n\}_n$ is equivalent to an orthonormal basis of $\mathcal H_1$, that is, there exists  an inverible operator 
  $X\in\mathcal B(\mathcal H_1)$   such that $\{Xk_n\}_n$ is an  orthonormal basis of $\mathcal H_1$.
Also, by Lemma \ref{lem3.1}, the sequence of spaces $$\{e_n\vee(A_{B_n/B_n(\lambda_n)}k_n\oplus k_n)\}_n$$ 
is equivalent to an orthogonal sequence 
of spaces, because these spaces  are the eigenspaces of a polynomially bounded 
operator $R$ 
with $B(R)=\mathbb O$. Moreover, by \eqref{4.3},  $(e_n, A_{B_n/B_n(\lambda_n)}k_n\oplus k_n)=0$, and 
$$1\leq\|A_{B_n/B_n(\lambda_n)}k_n\oplus k_n\|\leq (M^2/\delta^2+1)^{1/2},$$ where $M$ is 
a constant from 
the condition on polynomially boundedness of $R$ and $\delta=\inf_n|B_n(\lambda_n)|$. 
Thus, the union of the sequences $\{e_n\}_n$ and 
$\{ A_{B_n/B_n(\lambda_n)}k_n\oplus k_n\}_n$ is  equivalent to an orthonormal basis of $\mathcal H_0\oplus\mathcal H_1$. 
Therefore, the operator $Z$ acting by the formula 
$$Ze_n=e_n, \ \ \  Zk_n=A_{B_n/B_n(\lambda_n)}k_n\oplus k_n$$ 
for all indices $n$ is bounded. Clearly, $P_{\mathcal H_0}Z|_{\mathcal H_1}$ is bounded, 
too, and 
$$P_{\mathcal H_0}Zk_n=A_{B_n/B_n(\lambda_n)}k_n.$$ We conclude that the operator $Y$ defined by \eqref{4.6} is bounded for every bounded 
sequence $\{\alpha_n\}_n$. 

 If $B$ is a Carleson--Newman product with simple zeros,  
but the sequence of zeros of $B$ does not satisfy the Carleson condition, the author don't know for such sequences $\{\alpha_n\}_n$ the operator 
$Y$ defined by \eqref{4.6} is bounded. But by Theorems \ref{th2.3} and \ref{th3.2}, such sequence $\{\alpha_n\}_n$ exists. \end{example}

\subsection{On power bounded operators}

Particular cases of Lemma \ref{lem3.1}  and Theorem \ref{th1.2} can be formulated as follows.

\begin{proposition}\label{prop4.2} Assume that $\mathcal H$ is a Hilbert space, $\{x_n\}_n\subset\mathcal H$, $\mathcal H=\vee_n x_n$, 
$\{\lambda_n\}_n\subset\mathbb D$ is such that $\lambda_n\neq \lambda_k$ for $n\neq k$,   
$\{\lambda_n\}_n\subset\mathbb D$ satisfies the Carleson condition,
and $T\in \mathcal B(\mathcal H)$ acts by the formula $T x_n= \lambda_n x_n$ for all indices $n$. 
Assume that  $\mathcal E$ is a Hilbert space, $\{e_n\}_n$ is an orthonormal basis of $\mathcal E$, and $D\in \mathcal B(\mathcal E)$ acts 
by the formula $D e_n= \lambda_n e_n$ for all indices $n$. If $T$ is polynomially bounded, then $T\approx D$. \end{proposition}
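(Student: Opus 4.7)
The plan is to recognize Proposition~\ref{prop4.2} as a specialization of Lemma~\ref{lem3.1} applied with the Blaschke product $\theta := \prod_n b_{\lambda_n}$, together with a one-dimensional bookkeeping step at the end. First, I would verify that $T$ is absolutely continuous: decomposing $\mathcal H = \mathcal H_a \dotplus \mathcal H_s$ as before Lemma~\ref{lem2.1}, the singular part $T|_{\mathcal H_s}$ is similar to a singular unitary and therefore has no eigenvalue in $\mathbb D$. The skew projection $Q$ onto $\mathcal H_s$ parallel to $\mathcal H_a$ intertwines $T$ with $T|_{\mathcal H_s}$, so from $T x_n = \lambda_n x_n$ with $\lambda_n \in \mathbb D$ one gets $Q x_n = 0$; since $\{x_n\}$ spans $\mathcal H$, $\mathcal H_s = \{0\}$.

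Next I would show $\theta(T) = \mathbb O$. The Carleson condition implies $\sum_n (1-|\lambda_n|) < \infty$, so $\theta$ is a well-defined Blaschke product, and its factors $\{b_{\lambda_n}\}$ satisfy the generalized Carleson condition recalled in Sec.~3. On polynomials $p$, the identity $p(T) x_n = p(\lambda_n) x_n$ is immediate. By the weak-$*$ continuity of the $H^\infty$-calculus on an a.c.\ polynomially bounded operator, the functional $\varphi \mapsto \langle \varphi(T) x_n, y\rangle$ is weak-$*$ continuous on $H^\infty$ for each $y \in \mathcal H$, and since polynomials are weak-$*$ dense it agrees with $\varphi(\lambda_n)\langle x_n, y\rangle$ on all of $H^\infty$. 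Consequently $\theta(T) x_n = \theta(\lambda_n) x_n = 0$ for every $n$, and density of $\{x_n\}$ together with boundedness of $\theta(T)$ yields $\theta(T) = \mathbb O$.

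Now Lemma~\ref{lem3.1} applied with $\theta_n := b_{\lambda_n}$ produces an invertible $X \colon \mathcal H \to \bigoplus_n \mathcal H_n$, where $\mathcal H_n := \ker b_{\lambda_n}(T) = \ker(T - \lambda_n I)$, intertwining $T$ with $\bigoplus_n \lambda_n I_{\mathcal H_n}$ and sending each $x_n \in \mathcal H_n$ to $x_n$ in the $n$-th summand. Continuity of $X$ and density of $\{x_n\}$ in $\mathcal H$ force $X(\mathcal H) \subset \bigoplus_n \mathbb C x_n$; invertibility of $X$ then forces $\mathcal H_n = \mathbb C x_n$ for each $n$, so $T \approx \bigoplus_n \lambda_n I_{\mathbb C x_n} \approx D$. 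The only delicate step is the second one, where weak-$*$ continuity of the $H^\infty$-calculus is invoked to promote the polynomial identity to an $H^\infty$ identity; both the a.c.\ verification and the final dimension count are routine.
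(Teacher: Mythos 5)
Your proof is correct and follows exactly the route the paper intends: Proposition~\ref{prop4.2} is stated there without proof as a ``particular case'' of Lemma~\ref{lem3.1}, and your write-up supplies precisely the missing details (absolute continuity via the absence of eigenvalues in $\mathbb D$ for the singular part, the extension of $p(T)x_n=p(\lambda_n)x_n$ to the $H^\infty$-calculus to get $\theta(T)=\mathbb O$, and the identification $\ker(T-\lambda_nI)=\mathbb C x_n$ from the invertibility of $X$). No gaps; this is essentially the same argument.
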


\begin{proposition}\label{prop4.3} Assume that  $\mathcal E$ is a Hilbert space with an orthonormal basis $\{e_n\}_n$, 
$\{\lambda_n\}_n\subset\mathbb D$ is such that $\lambda_n\neq \lambda_k$ for $n\neq k$,   
$\{\lambda_n\}_n\subset\mathbb D$ satisfies the Carleson condition,  
and $D\in \mathcal B(\mathcal E)$ acts by the formula $D e_n= \lambda_n e_n$ for all indices $n$. Assume that $\mathcal H$ is a Hilbert space, 
 $T\in \mathcal B(\mathcal H)$ is a contraction, and  $A\in \mathcal B(\mathcal H,\mathcal E)$. 
 Set $$R=\begin{pmatrix} D & A\\  \mathbb O & T \end{pmatrix}. $$  
If $R$ is polynomially bounded, then $R$ is similar to a contraction. \end{proposition}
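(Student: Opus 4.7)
The plan is to apply Theorem \ref{th1.2} with $T_0 := D$ and $T_1 := T$. First I would produce the required inner function: the Carleson condition on $\{\lambda_n\}_n$ entails the Blaschke condition $\sum_n(1-|\lambda_n|)<\infty$, so the Blaschke product $\theta := \prod_n b_{\lambda_n}$ converges to an inner function whose zero set, with multiplicity one, is exactly $\{\lambda_n\}_n$. By Theorem \ref{th3.2} (applied with $N=1$), this $\theta$ satisfies the condition \eqref{1.4}.

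Next I would verify that $T_0 = D$ fits the remaining hypotheses of Theorem \ref{th1.2}. Since $\{e_n\}_n$ is orthonormal and $|\lambda_n|<1$, the diagonal operator $D$ is a normal contraction with $\|D\|=\sup_n|\lambda_n|\leq 1$; in particular it is polynomially bounded by the von Neumann inequality. Because every eigenvalue of $D$ lies in the open disc, $D$ has no unitary summand, i.e.\ it is completely nonunitary, and hence absolutely continuous in the sense recalled in the introduction. The $H^\infty$-calculus for a diagonal (normal) contraction acts eigenvector-wise, $\varphi(D)e_n = \varphi(\lambda_n)e_n$ for $\varphi\in H^\infty$, so $\theta(D)e_n = \theta(\lambda_n)e_n = 0$ for every $n$, which yields $\theta(D)=\mathbb{O}$.

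With $\theta$ satisfying \eqref{1.4} and $\theta(T_0)=\mathbb{O}$, with $T_1=T$ already a contraction (hence trivially similar to a contraction), and with $R$ polynomially bounded by hypothesis, every hypothesis of Theorem \ref{th1.2} is in place, so $R$ is similar to a contraction. I do not anticipate any real obstacle; this proposition is essentially Theorem \ref{th1.2} specialized to the concrete situation in which $T_0$ is the diagonal model of a $C_0$-operator annihilated by a Carleson Blaschke product, and the only minor verifications are the a.c.\ status of $D$ and the identity $\theta(D)=\mathbb{O}$, both immediate from the fact that $D$ is a completely nonunitary normal contraction.
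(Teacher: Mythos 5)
Your proposal is correct and follows exactly the route the paper intends: the paper presents Proposition \ref{prop4.3} without a separate proof, simply as a ``particular case'' of Theorem \ref{th1.2} combined with Theorem \ref{th3.2}, and your argument supplies precisely the routine verifications (convergence of the Blaschke product, $\theta$ satisfying \eqref{1.4} via Theorem \ref{th3.2} with $N=1$, and $D$ being an a.c.\ polynomially bounded operator with $\theta(D)=\mathbb O$) that make this specialization legitimate.
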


Example from {\cite[Proposition 5.2]{lemerdy}} shows that Propositions \ref{prop4.2} and \ref{prop4.3} can not be extended 
 to operators  satisfying the
Tadmor--Ritt condition, that is, to operators $T$ for which there exists $C>0$ such that 
$\|(T-zI)^{-1}\|\leq C/|z-1|$ for $z\in\mathbb C$, $|z|>1$ (see also \cite{vitsejfa} for simpler proof). The 
Tadmor--Ritt condition implies power boundedness (see \cite{lyubich}, \cite{nagyzem}, \cite{vitsepal}). 
Consequently, Propositions \ref{prop4.2} and \ref{prop4.3} can not be extended to  power bounded operators.
We sketch the construction of Le Merdy's example.

Assume that  $\mathcal E$ is a Hilbert space with an orthonormal basis $\{e_n\}_{n=0}^\infty$. There exists a family 
$\{\alpha_{kn}\}_{k,n=0}^\infty\subset\mathbb C$ 
such that the sequence $\{x_n\}_{n=0}^\infty$  defined by the formula 
\begin{equation}\label{4.7} x_{2n}=e_{2n}, \ \ x_{2n+1}=e_{2n+1}+\sum_{k=0}^\infty\alpha_{kn}e_{2k}, \ \ \ n\geq 0, \end{equation} 
has the following properties:
\begin{equation}\label{4.8} \|x_n\|\asymp 1, \ \ \sup_{n\geq 0}\|\mathcal P_n\|<\infty, \end{equation}
where $\mathcal P_n\in \mathcal B(\mathcal E)$ acts by the formula $\mathcal P_n x_k=x_k$, $k\leq n$, $\mathcal P_n x_k=0$, $k>n$, 
but $\{x_n\}_{n=0}^\infty$ is not a unconditional basis of $\mathcal E$. Assume that $\{\lambda_n\}_{n=0}^\infty\subset(0,1)$ and 
$\lambda_n<\lambda_{n+1}$, $n\geq 0$. Define $T\in \mathcal B(\mathcal E)$ by the formula $T x_n= \lambda_n x_n$ for all $n\geq 0$.
By {\cite[Lemma 2.2]{vitsejfa}}, $T$ satisfies the Tadmor--Ritt condition.   Consequenly, $T$ is power bounded 
(\cite{lyubich}, \cite{nagyzem}, \cite{vitsepal}). 
Set $$\mathcal E_0=\bigvee_{n\geq 0}e_{2n}, \ \ \mathcal E_1=\bigvee_{n\geq 0}e_{2n+1},$$
$$ D_0 e_{2n}=\lambda_{2n}e_{2n}, \ \ \  D_1 e_{2n+1}=\lambda_{2n+1}e_{2n+1},  \ \ \ n\geq 0.$$
It is easy to see that $T$ has the form 
\begin{equation}\label{4.9} T =\begin{pmatrix} D_0 & A \\  \mathbb O & D_1 \end{pmatrix}, \end{equation}
where $A\in \mathcal B(\mathcal E_1,\mathcal E_0)$ is an appropriate operator. 

Now suppose that $\{\lambda_n\}_{n=0}^\infty$ satisfies the Carleson condition.  If one assume that $T$ is similar to 
a contraction, 
 then the relation $T\approx D_0\oplus D_1$ must be fulfilled. But this relation means that $\{x_n\}_{n=0}^\infty$ is 
 a unconditional basis of $\mathcal E$, a contradiction. 
 Thus, $T$ has a form as in Proposition  \ref{prop4.2}, but $T$ does not satisfy the conclusion of Proposition  \ref{prop4.2}. 
 Also, since $T$ has the form \eqref{4.9}, $T$ has a form as in Proposition \ref{prop4.3}, but $T$ does not satisfy the conclusion of 
 Proposition  \ref{prop4.3}. 

\begin{remark}\label{rem4.4} Example of the  family $\{\alpha_{kn}\}_{k,n=0}^\infty$ such that the sequence $\{x_n\}_{n=0}^\infty$ 
defined by  \eqref{4.7} 
has the needed properties can be found in {\cite[Example III.14.5, p. 429]{singer}}. Namely, assume that the sequence $\{a_n\}_{n=0}^\infty$ 
has the following properties:
$$ a_n\geq 0, \ \ \sum_{n=0}^\infty n a_n^2<\infty, \ \text{ and } \  \sum_{n=0}^\infty  a_n =\infty.$$
Set 
\begin{equation}\label{4.10} \alpha_{kn}=0, \ \text{ if }  k<n, \ \ \text{ and } \ \alpha_{kn}=a_{k-n}, \ \text{ if }  k\geq n.  \end{equation}
Then the family $\{\alpha_{kn}\}_{k,n=0}^\infty$ has the needed properties. 

Suppose that $\{a_n\}_{n=0}^\infty$ is the sequence 
(of not necessary nonnegative) complex numbers such that $\sum_{n=0}^\infty |a_n|^2<\infty$, 
$\{\alpha_{kn}\}_{k,n=0}^\infty$ is defined by \eqref{4.10}, and $\{x_n\}_{n=0}^\infty$ is defined by  \eqref{4.7}. Set 
\begin{equation}\label{4.11} \varphi(z)=\sum_{n=0}^\infty a_n z^n,  \ \ \ z\in\mathbb D.  \end{equation}
 It is easy to see that  \eqref{4.8} is fulfilled if and only if the Hankel operator 
with the symbol $\psi$,  $\psi(z)=\overline z \varphi(\overline z)$, $z\in \mathbb T $ (here $\mathbb T$ is the unit circle),  is bounded, that is, $\varphi\in BMOA$, while $\{x_n\}_{n=0}^\infty$ is a unconditional basis 
if and only if the Toeplitz operator with the symbol $\varphi$ is bounded, that is, $\varphi\in H^\infty$ 
(on Hankel and Toeplitz operators and their symbols see, for example, {\cite[Ch. 1.1, 1.3]{peller}}, 
see also {\cite[\S VIII.2, \S VIII.6, Appendix]{nik86}}, {\cite[Ch. I.B.1, Sec. I.B.4.1]{nik02}}). 
The condition $\sum_{n=0}^\infty n |a_n|^2<\infty$ 
is a simple sufficient condition for the boundedness of the Hankel operator, 
while the condition $\sum_{n=0}^\infty  a_n <\infty$ 
is necessary and sufficient condition for $\varphi\in H^\infty$, where $\varphi$ is from  \eqref{4.11} with 
 $a_n\geq 0$ for all $n\geq 0$. \end{remark}

\end{document}